\theoremstyle{plain}
\theoremstyle{remark} 
\newtheorem {theo} {\bf Theorem} [section]
\newtheorem {prop} [theo] {\bf Proposition}
\newtheorem {coro} [theo] {\bf Corollary}
\newtheorem {lem} [theo] {\bf Lemma}
\newtheorem {defi} {\bf Definition}[section]
\newtheorem{rem}{\bf Remark}[section]
\newtheorem{conjecture}{\bf Conjecture}[section]
\numberwithin{equation}{section}
\definecolor{vbcolor}{rgb}{0.95, 0.15, 0.6}
\begin{document}
\title{Dynamical Frames and Hyperinvariant Subspaces}
\author{V. Bailey}
\address{Department of Mathematics\\
University of Oklahoma\\ Norman, OK 73019}
\email{victor.bailey@ou.edu}
\author{D. Han}
\address{Department of Mathematics\\
University of Central Florida\\ Orlando, FL 32816}
\email{deguang.han@ucf.edu}
\author{K. Kornelson}
\address{Department of Mathematics\\
University of Oklahoma\\ Norman, OK 73019}
\email{kkornelson@ou.edu}
\author{D. Larson}
\address{Department of Mathematics\\ Texas A\&M University\\ College Station, TX 77843}
\email{larson@math.tamu.edu}
\author{R. Liu}
\address{School of Mathematical Sciences \\ Nankai University\\ Tianjin, 300071 China}
\email{ruiliu@nankai.edu.cn}
\thanks{D. Han  is partially supported by the NSF grant DMS-2105038.}
\subjclass[2020]{Primary 46B15, 47D03, 42C15, 46N99, 20M15}
\keywords{Dynamical sampling and frames, semigroups, frame representation, invariant and hyperinvariant subspace}


%
\begin{abstract} 
The theory of dynamical frames evolved from practical problems in  dynamical sampling where the  initial state of a vector needs to be recovered from the space-time samples of evolutions of the vector. This leads to  the investigation of structured  frames obtained  from  the orbits of evolution operators. One of the basic problems in dynamical frame theory is to determine the semigroup representations, which we will call {\it central  frame representations},  whose  frame generators are unique (up to equivalence). Recently, Christensen, Hasannasab, and Philipp proved that all frame representations of the semigroup $\Bbb{Z}_{+}$ have this property. Their proof of this result relies on the characterization of the structure of shift-invariant subspaces in $H^2(\mathbb{D})$ due to Beurling. In this paper we settle the general uniqueness problem by  presenting  a characterization of central frame representations for any semigroup  in terms of the co-hyperinvariant subspaces of the left regular representation of the semigroup. This result is not only consistent with the known result of Han-Larson in 2000 for group representation frames, but also proves that all the frame generators of a semigroup generated  by any $k$-tuple $(A_1, ... A_k)$  of commuting bounded linear operators on a separable Hilbert space $H$ are equivalent, a case where the structure of shift-invariant subspaces, or submodules, of the Hardy Space on polydisks $H^{2}(\Bbb{D}^k)$ is still not completely characterized.
\end{abstract}

\maketitle

\section{Introduction}

Dynamical sampling was initially introduced by Aldroubi and his collaborators,   and has been  extensively investigated  over the last decade in the context of frames induced by orbits of operators, often called dynamical frames (c.f.,  \cite{AA-1}-\cite{AA-6}, \cite{Ole-1}-\cite{Ole-5}, \cite{Carlos-1, Carlos-2, Carlos-3}, \cite{Powell}). The basic model of dynamical sampling deals with the problem of recovering (or determining) a  signal/image/state vector $f$ in a Hilbert space $H$ from some observations  $\{\langle A_t(f) , g\rangle: t\in \mathcal{T}, g\in \mathcal{G}\}$ of the time  evolutions of the initial state of the vector $f$, 
  where $A_t$ is the evolution (usually, bounded and linear) operator at time $t$ and $\mathcal{G}$ is a fixed (usually, singleton or finite) subset of $H$. The core questions in dynamical sampling ask to find the conditions on $A_t (t\in \mathcal{T})$ and $\mathcal{G}$ such that every $f\in H$ can be recovered or uniquely determined in a stable way by  the observations $\{\langle f , (A_t)^*g\rangle: t\in \mathcal{T}, g\in \mathcal{G}\}$, where $A^*$ is the adjoint operator of $A$. While the unique determination of $f$ requires the completeness property of  the system $\{A_t^*g: t\in \mathcal{T}, g\in \mathcal{G}\}$, the stable recovery  usually requires the ``frame property"  of  the system $\{A_t^*g: t\in \mathcal{T}, g\in \mathcal{G}\}$.
  
 Recall that a  {\it  frame} for a separable Hilbert space, $H$ is a countable collection of vectors $\{f_n\}_{n\in I} \subset H$ that satisfy the following {\it frame  inequality} for every $f \in H$, 
 \begin{displaymath} C_{1} \|f\|^2 \leq \sum_{n \in I} | \langle f , f_n \rangle |^2 \leq C_{2} \|f\|^2 \quad 
\end{displaymath} 
where $ 0 < C_{1} \leq C_{2}$ are the (upper and lower) frame bounds. A {\it Parseval frame} is a frame for which  $C_1 = C_2 = 1$ , and a {\it Bessel sequence}  is a collection of vectors satisfying the upper bound in the frame inequality.  Given a Bessel sequence $\{f_n\}_{n \in I}$, the {\it analysis operator} is the bounded linear operator  $\Theta: H \to \ell^2(I)$ defined by $$ \Theta(f) = \{{\langle f, f_n \rangle}\}_{n \in I} \ \ ( \forall   f\in H )$$ and its adjoint $\Theta^*$ is the  {\it synthesis operator}. 
In the case that $\{f_n\}_{n \in I}$  is a frame, $S = \Theta^*\Theta$ is  a bounded invertible  operator that is called the  {\it frame operator}. One of the important concepts in frame theory is that of the concept of equivalent frames in different settings. In this paper we use the definition that a frame $\{f_n\}_{n\in I}$ for a Hilbert space $H$ is {\it equivalent} to a frame $\{g_n\}_{n\in I}$ for a Hilbert space $K$ if there exists a bounded linear operator $T: H\to K$ such that $Tf_n = g_n$ for every $n\in I$. It is well known (c.f.  \cite{HL-MAMS}) that two frames are equivalent if and only if the range spaces of their analysis operators are the same.
  
  In addition to the frame property, further structures on $\{A_t: t\in\mathcal{T}\}$  are also imposed in dynamical sampling. A typical and well-studied model in the literature is the semigroup structure  $\{A^n: n\geq 0\}$ generated by a single operator $A\in B(H)$. In this case, Christensen, Hasannasab and Philipp \cite{Ole-3}  obtained the uniqueness property for the frame generators of the system, i.e., any two dynamical frames $\{A^n\xi: n\geq 0\}$ and $\{A^n\eta: n\geq 0\}$  for a separable infinite-dimensional Hilbert space $H$ are  equivalent, i.e. there exists an invertible $ T \in B(H)$   that commutes with $A$ and  $\eta = T\xi$. Recently, there has been great interest in establishing the dynamical frames generated by the orbits of several  operators, such as frames generated by the orbit  $\{A_1^{n_1}A_{2}^{n_2}\cdots A_{k}^{n_k}:  n_i\geq 0\}$, where $(A_1, ... , A_k)$ is a $k$-tuple of bounded linear operators on a separable Hilbert space $H$. All of these frames belong to a large class of structured frames, namely, semigroup representation frames. So a basic and natural question asks how much of the frame theory by the single-operator-generated orbit  $\{A^n: n\geq 0\}$ can be generalized to more general semigroup dynamical frames. In particular, the question of whether the frames generated by every frame  representation of any commutative semigroup are equivalent remains open. 
    
The model spaces to study the dynamical frames of the form $\{A^n\xi: n\geq 0\}$ are the  co-invariant (that is, the orthogonal complement of an invariant subspace) subspaces of the unilateral shift operator (that is, the operator $T_z \in B( H^2(\mathbb{D}))$ such that $T_zf(z) = zf(z)$ $\forall f \in H^2(\mathbb{D}))$ on the Hardy space $H^2(\Bbb{D})$  where $A$ is the compression of unilateral shift operator  to the co-invariant subspace. From this it is not surprising that the proof of the equivalence of singly-generated dynamical frames mentioned above relies on the characterization of the structure of shift-invariant subspaces of the Hardy space $H^2(\Bbb{D})$ in terms of  inner functions by Beurling \cite{B48} However, there is no complete characterization of the structure of the shift-invariant subspaces (or submodules) in the higher dimensional cases due to the complexity of the submodules of  the Hardy space over polydisks $H^2(\Bbb{D}^k)$. . Therefore  the  same  approach does not apply to the dynamical frames  of  the form $\{A_1^{n_1}A_{2}^{n_2}\cdots A_{k}^{n_k}\xi:  n_i\geq 0\}$ unless additional properties on the dynamical frames  or on the submodules considered are imposed. 

Motivated by work of Han and Larson on the theory of group representation frames developed in \cite{HL-MAMS},  we use a different approach to address the  above frame generator equivalence problem for frame representations  of arbitrary (not necessarily commutative) semigroups. In this case, our model spaces of dynamical frames  generated by a semigroup $\mathcal{S}$ are  the co-invariant subspaces of the left regular representation of the semigroup. We obtain a complete characterization of all the frame representations with the frame generator equivalence property in terms of  co-hyperinvariant subspaces of the left regular representation of the semigroup. As a consequence of our characterization,  all the dynamical frames generated by  every semigroup representation of $\mathcal{S}$ are equivalent if  the weak operator topology  (wot) closed algebra generated by the left regular representation of $\mathcal{S}$ is maximal abelian.  With the help of this consequence and the characterization of the multiplier algebra of the Hardy space over the  polydisk $\Bbb{D}^k$, we are able to prove that if $(A_1, ... , A_k)$ is a commuting $k$-tuple of bounded linear operators on a separable Hilbert space $H$, then  all the frames of the form $\{A_1^{n_1}A_{2}^{n_2}\cdots A_{k}^{n_k}\xi:  n_j\geq 0\}$ for  $H$ are equivalent. We also discuss several variations of this result including the finite-dimensional frame  representations and the hybrid case  of  $\mathcal{G} \times \mathcal{S}$ for any finite abelian $\mathcal{G}$ and some commutative semigroup $\mathcal{S}$.

\section{Central Frame Representations}

In what follows we will always assume that the  semigroup $\mathcal{S}$ is at most countable,   unital with the identity element $e$,  and has the {\it  left cancellation property} in the sense that  $\forall s, t, \text{and} \, \, z \in \mathcal{S}$, whenever $st =sz$ we have $t = z$. A semigroup group representation on a Hilbert space $H$  refers to a semigroup homomorphism $\pi: \mathcal{S} \to B(H)$ such that $\pi(e) = I$, where $B(H)$ is the space of all the bounded linear operators on the Hilbert space $H$, and $I$ is the identity operator.

\begin{defi} A semigroup representation $\pi: \mathcal{S} \to B(H)$ is called a {\it frame representation} if there exists a vector $\xi\in H$ such that $\{\pi(s)\xi\}_{s\in\mathcal{S}}$ is a frame for $H$. 
In this case we say that $\xi$ is a {\it frame vector} or frame generator of $\pi$. We also call such a frame a {\it dynamical frame} or {\it semigroup representation frame}. 
\end{defi} 
Similarly, a Bessel vector $\xi$ of  a semigroup representation $\pi: \mathcal{S} \to B(H)$ refers to the case when $\{\pi(s)\xi\}_{s\in\mathcal{S}}$ is a Bessel sequence, and its analysis operator will be denoted by $\Theta_x$. In the case that $\{\pi(s)\xi\}_{s\in\mathcal{S}}$  is  a Parseval  frame (respectively, an orthonormal basis)  for $H$, we say that $\xi$ is a  Parseval frame vector (respectively, a complete wandering vector)  of $\pi$.

Two frame vectors $\xi$ and $\eta$ for a frame representation $\pi$  are said to be {\it equivalent}  if $\{\pi(s)\xi\}_{s\in\mathcal{S}}$  and $\{\pi(s)\xi\}_{s\in\mathcal{S}}$ are equivalent frames, that is,  there exists a bounded invertible linear operator $T\in B(H)$ such that $T\in \{\pi(s): s\in \mathcal{S}\}'$ and $T\xi = \eta$, where $$\mathcal{A}' = \{T \in B(H) : TA = AT, \forall A\in\mathcal{A}\}$$ is the {\it commutant} of $\mathcal{A} \subset B(H)$. 

Let $\Bbb{Z}_{+} = \{0\}\cup \Bbb{N} = \{n\in \Bbb{Z}: n \geq 0\}$ and  $\Bbb{Z}_{+}^{k} =  \{ (n_1, ... , n_k):  n_j \in \Bbb{Z}_{+}\}$. Then it is clear that  every representation $\pi$ of $\Bbb{Z}_{+}^{k}$ on a Hilbert space $H$ has the form 
$$
\pi(n_1, ..., n_k) = A_1^{n_1}\cdots A_{k}^{n_k}
$$
where $(A_1, ... , A_k)$ is a commuting $k$-tuple of bounded linear operators on $H$.  Another special case is when $\mathcal{S}$  is a countable group. In this case, the dynamical frames generated by group representations  (or projective unitary representations) belong to  one of the most important classes of structured frames, namely group representation frames  This class includes  well-known examples such as  Gabor frames and wavelet frames.

 In order to classify dynamical frames we formally introduce the following concept:

\begin{defi} A frame representation $\pi: \mathcal{S}\to B(H)$ is called {\it central} if all the frame vectors of $\pi$ are equivalent. 
\end{defi} 
 
With the above definition, The result of Christensen et al. can be restated as follows:

\begin{theo}\label{Ole}  \cite{Ole-3} Every  frame representation $\pi: \Bbb{Z}_{+} \to B(H)$ is  central.
\end{theo}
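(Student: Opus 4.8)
The plan is to reduce the statement to a single function-theoretic fact about the backward shift. Writing $A = \pi(1)$, every element of $\mathbb{Z}_{+}$ acts as $\pi(n) = A^n$, so a frame vector is a $\xi \in H$ with $\{A^n\xi\}_{n\ge 0}$ a frame for $H$, and for two frame vectors $\xi, \eta$ I must produce a bounded invertible $T \in \{A\}'$ with $T\xi = \eta$. By the criterion quoted in the introduction (two frames are equivalent iff their analysis operators have the same range), it suffices to prove that the ranges $M_\xi := \mathcal{R}(\Theta_\xi)$ and $M_\eta := \mathcal{R}(\Theta_\eta)$ coincide: the resulting operator $T$ with $TA^n\xi = A^n\eta$ then automatically commutes with $A$, since $TA$ and $AT$ agree on the spanning set $\{A^n\xi\}$, and it is invertible by running the same argument with the roles of $\xi$ and $\eta$ reversed.

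To analyze $M_\xi$, identify $\ell^2(\mathbb{Z}_{+})$ with $H^2(\mathbb{D})$ and compute $\Theta_\xi(A^*f)_n = \inner{f, A^{n+1}\xi} = (S^*\Theta_\xi f)_n$, so that $\Theta_\xi A^* = S^*\Theta_\xi$ with $S^*$ the backward shift; taking adjoints gives $A\Theta_\xi^* = \Theta_\xi^* S$, i.e. $\Theta_\xi^*(\phi) = \phi(A)\xi$ for polynomials $\phi$ and, by boundedness, for all $\phi \in H^2$. Hence $M_\xi$ is backward-shift invariant and, since $\Theta_\xi$ has closed range, $M_\xi^\perp = \ker\Theta_\xi^*$ is a (forward) shift-invariant subspace. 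By Beurling's theorem this subspace is either $\{0\}$ or $\theta_\xi H^2$ for an inner function $\theta_\xi$, so that $M_\xi = H^2$ or $M_\xi = K_{\theta_\xi} := H^2 \ominus \theta_\xi H^2$. The whole theorem now comes down to showing that this inner function does not depend on the frame vector.

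The decisive step, and the one I expect to be the main obstacle, is proving $\theta_\xi(A) = 0$. Because $\Theta_\xi$ is bounded below with closed range, $A^*$ is similar to $S^*|_{M_\xi}$, so $A$ is similar to a completely non-unitary contraction and therefore admits a bounded $H^\infty$ functional calculus $\phi \mapsto \phi(A)$ that agrees with polynomials and lands in the algebra generated by $A$. Approximating $\theta_\xi$ by its dilates $\theta_\xi(rz) \to \theta_\xi$ (bounded in $H^\infty$, convergent in $H^2$) and using $\Theta_\xi^*\theta_\xi = 0$ (which holds since $\theta_\xi \in \ker\Theta_\xi^*$), one obtains $\theta_\xi(A)\xi = 0$; then $\theta_\xi(A)A^n\xi = A^n\theta_\xi(A)\xi = 0$ for all $n$, and because $\{A^n\xi\}$ spans $H$ this forces $\theta_\xi(A) = 0$. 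The care required here is entirely in justifying the functional calculus and the interchange of limits.

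Finally I would close the loop. The relation $\theta_\xi(A) = 0$ together with $\Theta_\eta^*\phi = \phi(A)\eta$ gives $\Theta_\eta^*\theta_\xi = \theta_\xi(A)\eta = 0$, i.e. $\theta_\xi \in \ker\Theta_\eta^* = \theta_\eta H^2$, so $\theta_\eta$ divides $\theta_\xi$; by symmetry $\theta_\xi$ divides $\theta_\eta$, whence $\theta_\xi = \theta_\eta$ up to a unimodular constant and $M_\xi = K_{\theta_\xi} = K_{\theta_\eta} = M_\eta$. In fact this identifies $\theta_\xi$ as the minimal inner function of $A$, manifestly independent of the frame vector; the degenerate case $\ker\Theta_\xi^* = \{0\}$ is handled the same way, since a nonzero $\ker\Theta_\eta^*$ would produce an inner $\theta_\eta$ with $\theta_\eta(A) = 0$ and hence $\theta_\eta \in \ker\Theta_\xi^* = \{0\}$, a contradiction, forcing $M_\xi = M_\eta = H^2$. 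With $M_\xi = M_\eta$ in hand, the reduction in the first paragraph yields the required $T$, and $\pi$ is central.
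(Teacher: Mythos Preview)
Your argument is correct and essentially reproduces the original proof of Christensen, Hasannasab and Philipp that the paper cites: reduce to the analysis-range criterion, identify $M_\xi^\perp$ as a shift-invariant subspace of $H^2(\mathbb{D})$, invoke Beurling to write it as $\theta_\xi H^2$, show $\theta_\xi(A)=0$ via the Sz.-Nagy--Foia\c{s} functional calculus (the step you flag as needing care is justified cleanly by observing that $A$ is similar to the model operator $S_{\theta_\xi}=P_{K_{\theta_\xi}}S|_{K_{\theta_\xi}}$, for which $\theta_\xi(S_{\theta_\xi})=0$ is standard), and conclude that $\theta_\xi$ is the minimal inner function of $A$, hence independent of $\xi$. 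The present paper, however, does not reprove Theorem~\ref{Ole} this way; it recovers the statement as the $k=1$ case of the result that every frame representation of $\mathbb{Z}_+^k$ is central, whose proof bypasses Beurling entirely. The mechanism is Theorem~\ref{main-thm1} together with Proposition~\ref{prop-abelian}: centrality follows once the analysis ranges are co-hyperinvariant for the left regular representation, and co-hyperinvariance is automatic when the wot-closed algebra of the shifts is maximal abelian; Lemma~\ref{key-lem2} supplies exactly this via the identification $\mathcal{M}(\mathbb{D}^k)'=H^\infty(\mathbb{D}^k)$ and a Fej\'er-kernel approximation. Your route is more explicit and pins down the invariant as the minimal inner function of $A$, but it is tied to the Beurling classification and therefore does not extend to $k\ge 2$; the paper's route is more abstract but handles all $k$ uniformly and, in principle, any commutative semigroup whose regular-representation algebra is maximal abelian.
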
 

 Our first goal  of this paper is to characterize all the central frame representations for arbitrary semigroups. Let $H$ and $K$ be separable Hilbert spaces. Given a semigroup $\mathcal{S}$, we say that two representations $\pi: \mathcal{S}\to B(H)$ and  $\sigma: \mathcal{S}\to B(K)$ are equivalent if there exists a bounded invertible linear operator $T: H\to K$ such that $\pi(s) = T^{-1}\sigma(s)T$ holds for every $s\in\mathcal{S}$. It is straightforward to verify  the following lemma.
 \begin{lem}
 Let $\mathcal{S}$  be a semigroup and let $\pi: \mathcal{S}\to B(H)$ and  $\sigma: \mathcal{S}\to B(K)$ be semigroup representations where $H$ and $K$ are separable Hilbert spaces. Then the following statements hold:
 \begin{itemize}
 
 \item[(i)] If $\pi$ and $\sigma$ are equivalent, then $\pi$ is a frame representation if and only if $\sigma$ is a frame representation. Furthermore,  $\pi$ is central if and only if $\sigma$ is central. 
 
 \item[(ii)]  Two frame representations $\pi$ and $\sigma$ are equivalent if and only if  there exist $\xi\in H$ and $\eta\in K$ such that $\{\pi(s)\xi\}_{s\in\mathcal{S}}$ and $\{\sigma(s)\eta\}_{s\in\mathcal{S}}$ are equivalent frames.
 
 \item[(iii)] If two representations $\pi$ and $\sigma$ of $\mathcal{S}$  both admit complete wondering vectors, then they are unitarily equivalent. 
 
 \end{itemize}
\end{lem}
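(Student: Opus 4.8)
My plan is to prove all three parts by a single unifying device: construct the relevant operator explicitly on a generating vector, and then promote the pointwise relation it satisfies to an operator identity using that $\mathcal{S}$ acts by a homomorphism and that the orbit of a frame vector spans the space.

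For (i), I would start from an invertible $T\in B(H,K)$ with $\sigma(s)=T\pi(s)T^{-1}$. If $\xi$ is a frame vector for $\pi$, set $\eta=T\xi$; then $\sigma(s)\eta=T\pi(s)\xi$, so $\{\sigma(s)\eta\}_{s}$ is the image of the frame $\{\pi(s)\xi\}_s$ under the bounded invertible $T$ and is therefore a frame, proving $\sigma$ is a frame representation (the converse is symmetric through $T^{-1}$). For centrality, given frame vectors $\eta_1,\eta_2$ of $\sigma$, I would pull them back to frame vectors $\xi_i=T^{-1}\eta_i$ of $\pi$, apply centrality of $\pi$ to obtain an invertible $W\in\{\pi(s):s\in\mathcal{S}\}'$ with $W\xi_1=\xi_2$, and then verify that $V=TWT^{-1}$ is invertible, lies in $\{\sigma(s):s\in\mathcal{S}\}'$, and satisfies $V\eta_1=\eta_2$. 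The only computation requiring care is that $V$ commutes with each $\sigma(s)$, which follows by substituting $\sigma(s)=T\pi(s)T^{-1}$ and using $W\pi(s)=\pi(s)W$.

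For (ii), the forward direction is exactly the construction $\eta=T\xi$ from the previous paragraph, which simultaneously produces frame vectors and realizes the equivalence of the two frames via $T$. The substantive direction is the converse, and here I would invoke the cited characterization of frame equivalence by equality of analysis-operator ranges: since $\{\pi(s)\xi\}_s$ and $\{\sigma(s)\eta\}_s$ are equivalent frames, $\Theta_\xi$ and $\Theta_\eta$ have the same range, which yields a bounded invertible $U\colon H\to K$ with $U\pi(s)\xi=\sigma(s)\eta$ for all $s$. Taking $s=e$ gives $U\xi=\eta$, and then for all $s,t\in\mathcal{S}$,
\[
U\pi(s)\bigl(\pi(t)\xi\bigr)=U\pi(st)\xi=\sigma(st)\eta=\sigma(s)\sigma(t)\eta=\sigma(s)U\bigl(\pi(t)\xi\bigr),
\]
so $U\pi(s)$ and $\sigma(s)U$ agree on the orbit $\{\pi(t)\xi\}_t$, whose closed span is $H$ because $\xi$ is a frame vector. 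By continuity $U\pi(s)=\sigma(s)U$, hence $\sigma(s)=U\pi(s)U^{-1}$ and $\pi,\sigma$ are equivalent.

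For (iii), I would take complete wandering vectors $\xi,\eta$, so that $\{\pi(s)\xi\}_s$ and $\{\sigma(s)\eta\}_s$ are orthonormal bases of $H$ and $K$; orthonormality forces the indexing by $\mathcal{S}$ to be faithful, so defining $U$ on these bases by $U\pi(s)\xi=\sigma(s)\eta$ carries one orthonormal basis bijectively onto the other and extends to a unitary, after which the same homomorphism-and-density computation as in (ii) gives $\sigma(s)=U\pi(s)U^{*}$. I expect the main obstacle throughout to be the converse of (ii): one must guarantee that frame equivalence delivers an \emph{invertible} intertwiner $U$ rather than merely a bounded map in one direction — this is precisely where the equality-of-ranges characterization is indispensable — and one must justify upgrading the identity $U\pi(s)\xi=\sigma(s)\eta$ on the single vector $\xi$ to an operator identity, which is exactly the role of the semigroup homomorphism property together with the spanning property of the frame orbit.
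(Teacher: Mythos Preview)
Your proposal is correct. The paper itself gives no proof of this lemma, simply stating that it is ``straightforward to verify,'' so there is no argument to compare against; your write-up supplies exactly the details one would expect, and the homomorphism-plus-density maneuver you use in (ii) and (iii) is in fact the same device the paper employs later (e.g., in the proof of Proposition~\ref{model} and Lemma~\ref{key-lem1}) to promote identities on an orbit to operator identities.
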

Recall that the left regular representation of $\mathcal{S}$ is the semigroup homomorphism  $\lambda: \mathcal{S} \to B(\ell^2(\mathcal{S}))$ defined by
$$
\lambda(s) \delta_{t} = \delta_{st}, \ \ \forall t\in \mathcal{S}
$$
where $\delta_s(s) = 1$ and $\delta_{s}(t)= 0$ for $t\neq s$. Since $\mathcal{S}$ is unital and has the left cancellation property, we get that $\lambda(s)$ is an isometry for each $s$, and $\{\lambda(s) \delta_{e}: s\in \mathcal{S}\}$ is the standard orthonormal basis of $\ell^2(\mathcal{S})$. From this, we see that $\lambda$ is a representation that  admits a  complete wandering vector $\delta_e$. Thus, $\lambda$ is unitarily equivalent to any representation that also admits a complete wondering vector. In what follows we will use $\mathcal{A}_{\mathcal{S}}$ to denote the closure of the algebra generated the left regular representation in the weak operator topology, i.e., 
$$
\mathcal{A}_{\mathcal{S}} = \overline{span}^{wot}\{\lambda(s); s\in\mathcal{S}\}.
$$
where $wot$ denotes the weak operator topology.

A closed subspace $M \subseteq H$   is called {\it co-invariant} under a subset   $\mathcal{A}$ $\subseteq$ $B(H)$ if  $M^{\perp}$ is an invariant subspace (that is, $A M^{\perp} \subset M^{\perp}$ for all $A \in  \mathcal{A}$) of $\mathcal{A}$. In this case we also say that the orthogonal projection $P$ onto $M$ is co-invariant  under  $\mathcal{A}$. We say a subspace in $H^2(\Bbb{D}^k)$ is shift-invariant (or a  {\it submodule} of $H^2(\Bbb{D}^k)$) if it is invariant under each of the shift operators $T_{z_i}$ where $T_{z_i}f(z_1, z_2, \ldots, z_k) = z_i f(z_1, z_2, \ldots, z_k)$ \, $\forall f \in H^2(\Bbb{D}^k)$. In the literature, a co-invariant subspace for each of the shift operators (that is, the orthogonal complement of a submodule in $H^2(\Bbb{D}^k))$ is often called a quotient module or a model space. Now let $P$ be the orthogonal projection onto  a co-invariant subspace $M$ of the algebra  $\mathcal{A}_\mathcal{S}$  generated by $\lambda$. Then
$P\lambda(s) = P\lambda(s)P$ for every $s\in \mathcal{S}$. This implies that the  compression of $\lambda$ to the co-invariant subspace $M$  defined by
$$\lambda_{P}(s) := P\lambda(s)P, \ (s\in\mathcal{S})$$
is a  frame representation of  $\mathcal{S}$ with a frame vector $\xi = P\delta_e \in M$. Borrowing the terminology from Hardy space theory, in  the following we will call the subrepresentation $\lambda_P$ a {\it model space representation} of $\mathcal{S}$.

Since  semigroup frame representations (respectively,  central frame representations) are preserved under equivalent representations, the following result tells that we only need to work on the model space representations.

\begin{prop} \label{model}
     Let S be a unital countable semigroup with the left-cancellation property. Suppose that $\pi:  S\to B(H)$ is a frame representation with  frame generator $\xi$. Let $P$ be the orthogonal projection onto the range space $\Theta_{\xi}(H)$\ $\subseteq \ell^2(\mathcal{S})$ of the analysis operator $\Theta_\xi$. Then we have the following

(i) $\Theta_{\xi}(H)$ is co-invariant under the left regular representation $\lambda$.

(ii) $\pi$ is equivalent to the model space  representation $\lambda_P$. Moreover, $\{\pi(s)\xi\}$ and $\{P\lambda(s)P\delta_{e}\}$ are equivalent frames.


\end{prop}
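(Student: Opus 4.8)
The plan is to hang the entire argument on a single intertwining identity relating the analysis operator $\Theta_\xi$ to the left regular representation. First I would record the adjoint action of $\lambda$ on the standard basis: using the left cancellation property, for each $t,s\in\mathcal{S}$ one has $\lambda(t)^*\delta_s = \delta_r$ when $s = tr$ for the (necessarily unique) $r\in\mathcal{S}$, and $\lambda(t)^*\delta_s = 0$ when $s\notin t\mathcal{S}$. Writing $\Theta_\xi(f) = \sum_{s\in\mathcal{S}}\langle f,\pi(s)\xi\rangle\delta_s$ and discarding the annihilated terms, I would reindex the survivors by $s = tr$ — which is legitimate precisely because left cancellation makes $r\mapsto tr$ a bijection of $\mathcal{S}$ onto $t\mathcal{S}$ — to obtain
$$
\lambda(t)^*\Theta_\xi(f) = \sum_{r\in\mathcal{S}}\langle f,\pi(tr)\xi\rangle\delta_r = \sum_{r\in\mathcal{S}}\langle \pi(t)^*f,\pi(r)\xi\rangle\delta_r = \Theta_\xi(\pi(t)^*f),
$$
so that $\lambda(t)^*\Theta_\xi = \Theta_\xi\,\pi(t)^*$ for every $t\in\mathcal{S}$. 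This identity is the technical heart of the proposition, and the step I expect to demand the most care, since it is exactly where both the homomorphism property of $\pi$ and the left cancellation law are consumed.

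Part (i) is then immediate. Because $\{\pi(s)\xi\}$ is a frame it is complete, so $\Theta_\xi$ is bounded below and $M := \Theta_\xi(H)$ is a genuine closed subspace with orthogonal projection $P$. The displayed identity shows that $\lambda(t)^*$ maps $M$ into $M$ for every $t$; equivalently $M^{\perp}$ is invariant under each $\lambda(t)$, which is exactly the assertion that $M$ is co-invariant under $\lambda$.

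For part (ii) I would first simplify the second frame. Co-invariance yields $P\lambda(s) = P\lambda(s)P$ (as already observed in the text preceding the proposition), and hence, using $\lambda(s)\delta_e = \delta_{s}$,
$$
P\lambda(s)P\delta_e = P\lambda(s)\delta_e = P\delta_s .
$$
Thus $\{P\lambda(s)P\delta_e\}_{s\in\mathcal{S}} = \{P\delta_s\}_{s\in\mathcal{S}}$ is nothing but the image of the orthonormal basis $\{\delta_s\}$ under $P$; it is therefore a Parseval frame for $M$ whose analysis operator sends $m\in M$ to $\sum_{s}\langle m,\delta_s\rangle\delta_s = m$, i.e. it is the inclusion $M\hookrightarrow\ell^2(\mathcal{S})$, whose range is precisely $M$. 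On the other hand, the analysis operator of $\{\pi(s)\xi\}$ is $\Theta_\xi$, whose range is $M$ by the very definition of $P$. Since the two frames have analysis operators with identical range, the criterion recalled in the introduction (equivalent frames are exactly those whose analysis ranges coincide) shows they are equivalent frames, which is the ``moreover'' assertion. Finally, applying part (ii) of the preceding Lemma to the equivalent frames $\{\pi(s)\xi\}$ and $\{\lambda_P(s)P\delta_e\}$ of the representations $\pi$ and $\lambda_P$ upgrades this to equivalence of the representations themselves, completing the proof.
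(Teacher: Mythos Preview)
Your proof is correct and follows essentially the same route as the paper: you establish the intertwining relation $\lambda(t)^*\Theta_\xi = \Theta_\xi\,\pi(t)^*$ via the adjoint action of $\lambda$ on basis vectors and left cancellation, deduce co-invariance of $\Theta_\xi(H)$, and then compare ranges of analysis operators to obtain frame equivalence. The only differences are cosmetic---you spell out why $\Theta_\xi(H)$ is closed and why the analysis operator of $\{P\delta_s\}$ is the inclusion, and you cite the preceding Lemma explicitly for the upgrade to representation equivalence, whereas the paper leaves these implicit.
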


\begin{proof}
(i)  
For all $s,t,w\in S$,
we have
\[\big\langle \lambda(s)^*(\delta_t),\delta_w \big\rangle=\big\langle \delta_t, \lambda(s)(\delta_w)\big\rangle
=\langle \delta_t, \delta_{sw}\rangle=\left \{\begin{array}{ll}
1, & \mbox{ if }\; t=sw\,;\\
0, & \mbox{ otherwise.}
\end{array}
\right.
\]

By the left-cancellation property, we obtain that
\[\lambda(s)^*(\delta_t)= \left \{
\begin{array}{ll}
\delta_w, & \mbox{ if }\; t=sw\,;\\
0, & \mbox{ otherwise.}
\end{array}
\right.\] 

Then for any $x\in H$, we have
\begin{eqnarray*}
  \lambda(s)^*\big(\Theta_{\xi}(x)\big) &=& \lambda(s)^*\Big( \sum_{t\in S}\big\langle x,\pi(t)(\xi) \big\rangle \delta_t
  \Big) = \sum_{t\in S} \big\langle x,\pi(t)(\xi) \big\rangle \lambda(s)^* (\delta_t)\\
  &=& \sum_{w \in S, t=sw} \big\langle x,\pi(t)(\xi) \big\rangle \delta_w
  = \sum_{w\in S} \big\langle x,\pi(sw)(\xi) \big\rangle \delta_w \\
  &=& \sum_{w\in S} \big\langle x,\pi(s)\pi(w)(\xi) \big\rangle \delta_w
  = \sum_{w\in S} \big\langle \pi(s)^*(x), \pi(w)(\xi) \big\rangle
  \delta_w \\
  &=& \Theta_{\xi}\big(\pi(s)^*(x)\big),
\end{eqnarray*}
which implies that $\lambda(s)^*\Theta_{\xi} =\Theta_{\xi} \pi(s)^*$. Thus $\Theta_{\xi}(H)$ is invariant
under $\lambda(s)^*$ for every $s$, and so  $\Theta_{\xi}(H)$ is co-invariant under the left regular representation $\lambda$.

(ii) Since $P$ is co-invariant, we have $P\lambda(s) = P\lambda(s)P$. Thus the range space of the analysis operator of the frame $\{\lambda_{P}(s)P\delta_{e}\}_{s\in \mathcal{S}}  (= \{P\lambda(s)P\delta_{e}\}_{s\in \mathcal{S}})$ is $range(P) = \Theta_{\xi}(H)$. Since the two frames $\{\pi(s)\xi\}_{s\in \mathcal{S}}$ and $\{\lambda_P(s)P\delta_{e}\}_{s\in \mathcal{S}}$ have the same range space of their analysis operators, this implies that they are equivalent. Consequently, $\pi$ and $\lambda_{P}$ are equivalent frame representations.
\end{proof}

In the case that $\mathcal{S}$ is a countable group,  the central frame representations have been completely characterized by Han and Larson in 2000.

\begin{theo} \label{group-case} \cite{HL-MAMS} Let $\pi$ be a frame representation of a countable group $\mathcal{G}$ with a frame vector $\xi$, $P$ be the orthogonal projection onto the range space  $\Theta_{\xi}(H)$ of the analysis operator and $\mathcal{M}$ be  the von Neumann algebra generated by the left regular representation of $\mathcal{G}$
 Then the following are equivalent:

\begin{itemize}

\item[(i)]  $P$ is in the center of $\mathcal{M}$, i.e., $P\in \mathcal{M}\cap \mathcal{M}'$.

\item[(ii)] Every  frame vector $\eta$ of   $\pi$  is equivalent to $\xi$, i.e.,  $\pi$ is central.

\item[(iii)] For every Bessel vector $x$ of $\pi$, there exists an operator $T\in \mathcal{M}'$ such that $x = T\xi$.

\end{itemize}
\end{theo}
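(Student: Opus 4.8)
The plan is to reduce everything to the left regular representation and then translate the three conditions into statements about operators in the commutant $\mathcal{M}'$, exploiting that $\delta_e$ is a cyclic and separating vector for both $\mathcal{M}$ and $\mathcal{M}'$. By Proposition~\ref{model} I may assume $\pi=\lambda_P$ acts on $M=\Theta_{\xi}(H)=P\ell^2(\mathcal{G})$ with $\xi=P\delta_e$. The first observation, special to groups, is that co-invariance forces $M$ to be reducing: since each $\lambda(g)$ is unitary and $g$ ranges over a group, $\lambda(g)M^{\perp}\subseteq M^{\perp}$ for all $g$ yields $\lambda(g)M^{\perp}=M^{\perp}$, so $P\in\mathcal{M}'$ automatically. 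Hence (i) is equivalent to $P\in\mathcal{M}$, and $\pi(g)=\lambda(g)|_M$. I then set up the dictionary: a vector $\eta\in M$ is Bessel for $\pi$ iff there is a unique $T\in\mathcal{M}'$ with $\mathrm{ran}\,T\subseteq M$ and $\eta=T\delta_e$ (uniqueness because $\delta_e$ separates $\mathcal{M}'$); in that case $\pi(g)\eta=T\delta_g$, the analysis operator is $\Theta_{\eta}x=T^{*}x$, and $\eta$ is a \emph{frame} vector iff $T^{*}|_M$ is bounded below, equivalently the frame operator $TT^{*}|_M$ is invertible on $M$. Under this dictionary $\xi$ corresponds to $T=P$, and since equivalence of frame vectors means equality of analysis ranges, $\eta\sim\xi$ iff $T^{*}M=M$.

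Given this dictionary, three of the implications are routine. For (i)$\Rightarrow$(iii): if $P$ is central and $x=S\delta_e$ is any Bessel vector (so $S\in\mathcal{M}'$, $PS=S$), centrality gives $SP=PS=S$, whence $x=S\delta_e=SP\delta_e=S\xi$ with $S\in\mathcal{M}'$. For (iii)$\Rightarrow$(i): applying the hypothesis to the Bessel vectors $PB\delta_e$ with $B\in\mathcal{M}'$ and using that $\delta_e$ separates $\mathcal{M}'$ yields $PB(I-P)=0$ for every $B\in\mathcal{M}'$; passing to adjoints gives $(I-P)BP=0$ as well, so $PB=PBP=BP$, i.e. $P\in(\mathcal{M}')'=\mathcal{M}$. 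For (ii)$\Rightarrow$(i): testing centrality against the frame vectors $PU\delta_e$ for unitaries $U\in\mathcal{M}'$ (here $T=PU$ and $T^{*}|_M=U^{*}|_M$ is bounded below) shows that $\eta\sim\xi$ forces $UM=M$, i.e. $U$ commutes with $P$; since $\mathcal{M}'$ is spanned by its unitaries, $P\in\mathcal{M}$.

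The substantive implication is (i)$\Rightarrow$(ii), and this is where I expect the main obstacle to lie. Let $\eta$ be a frame vector with associated $T\in\mathcal{M}'$; centrality of $P$ gives $T=PTP$, so $S:=T|_M$ lies in the reduced commutant $(\mathcal{M}')_P=\{\pi(g)\}'$ and satisfies $S\xi=\eta$. The frame property says precisely that $SS^{*}=TT^{*}|_M$ is invertible on $M$, so $S$ is right-invertible in $(\mathcal{M}')_P$. The key point is that $\mathcal{M}'$, being the commutant of the left regular representation of a group (the right group von Neumann algebra), is a \emph{finite} von Neumann algebra, and its reduction by the central projection $P$ is again finite; in a finite von Neumann algebra a right-invertible element is invertible, since its polar decomposition produces a partial isometry whose final projection is $I$, which by finiteness forces the initial projection to be $I$ as well. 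Hence $S$ is an invertible element of $\{\pi(g)\}'$ with $S\xi=\eta$, giving $\eta\sim\xi$.

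This finiteness is exactly the group-theoretic ingredient with no semigroup analogue, so I would flag it as the crux of the argument: for $\mathcal{S}=\mathbb{Z}_{+}$ the algebra $\mathcal{A}_{\mathcal{S}}$ is the nonfinite analytic Toeplitz algebra and co-invariant subspaces need not be reducing, which is why $P\in\mathcal{M}'$ is no longer automatic and why the general semigroup case demands the different, co-hyperinvariance-based analysis developed in the remainder of the paper.
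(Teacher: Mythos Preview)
The paper does not prove this theorem; it is quoted from \cite{HL-MAMS}. Your argument is correct, and its crux matches what the paper itself signals: the Remark following Theorem~\ref{main-thm1} states that in the group case ``the theorem was proved by using the finite von Neumann algebra property of the algebra generated by the left regular representation,'' which is exactly the device you invoke for (i)$\Rightarrow$(ii). The reduction to the model space via Proposition~\ref{model}, the observation that co-invariance already forces $P\in\mathcal{M}'$ because each $\lambda(g)$ is unitary, and the dictionary between Bessel vectors of $\lambda_P$ and operators in $\mathcal{M}'$ via the cyclic separating vector $\delta_e$ are the natural moves; the remaining implications (i)$\Leftrightarrow$(iii) and (ii)$\Rightarrow$(i) follow cleanly from them. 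In short, there is no in-paper proof to compare against, but your proof is sound and in line with the approach the authors attribute to \cite{HL-MAMS}.
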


The above result is the reason we adopt of the term ``central" for a frame representation with the property that all the frame vectors are equivalent. If $\mathcal{G}$ is an abelian group, then $\mathcal{M} = \mathcal{M}'$. This immediately implies the following:

\begin{coro} \label{abelian-group}  \cite{HL-MAMS} Every frame representation of  a countable group $\mathcal{G}$ is  central  if and only if $\mathcal{G}$ is abelian.
\end{coro}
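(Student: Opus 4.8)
\section*{Proof proposal}

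The plan is to read off the Corollary directly from Theorem~\ref{group-case} by pinning down exactly when the range projection $P$ of an analysis operator can fail to be central in $\mathcal{M}$. The decisive preliminary observation is that in the group setting co-invariance and reducibility coincide: since each $\lambda(s)$ is unitary and $\mathcal{G}$ is closed under inverses, a subspace of $\ell^2(\mathcal{G})$ is invariant under all $\lambda(s)$ precisely when it reduces them. Hence, by Proposition~\ref{model}(i), for any frame representation $\pi$ with frame vector $\xi$ the projection $P$ onto $\Theta_{\xi}(H)$ always lies in $\mathcal{M}'$. Consequently the equivalent conditions of Theorem~\ref{group-case} can fail only through the gap between $\mathcal{M}'$ and the center $\mathcal{M}\cap\mathcal{M}'$: $\pi$ is non-central exactly when its range projection sits in $\mathcal{M}'$ without being central.

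For the ``if'' direction, suppose $\mathcal{G}$ is abelian. Then, as recorded just before the statement, $\mathcal{M}=\mathcal{M}'$, so the center $\mathcal{M}\cap\mathcal{M}'$ is all of $\mathcal{M}'$. Given any frame representation $\pi$ with frame vector $\xi$, the range projection $P$ belongs to $\mathcal{M}'=\mathcal{M}\cap\mathcal{M}'$ by the observation above, so condition~(i) of Theorem~\ref{group-case} holds and $\pi$ is central. Thus every frame representation of an abelian group is central.

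For the ``only if'' direction I would argue by contraposition: assuming $\mathcal{G}$ is non-abelian, I will produce a non-central frame representation. Since $\lambda$ is faithful, $\mathcal{M}$ is non-abelian (if it were abelian then $\lambda(st)=\lambda(ts)$ for all $s,t$, forcing $\mathcal{G}$ to be abelian), and therefore so is the commutant $\mathcal{M}'$, which is the von Neumann algebra of the right regular representation and hence $*$-anti-isomorphic to $\mathcal{M}$. A non-abelian von Neumann algebra is not contained in its own center, and since it is the norm-closed span of its projections it must contain a projection $P$ that is not central; that is, $P\in\mathcal{M}'$ but $P\notin\mathcal{M}'\cap\mathcal{M}''=\mathcal{M}\cap\mathcal{M}'$. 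I then form the model space representation $\lambda_P$. By the computation in Proposition~\ref{model}(ii), $P\delta_e$ is a (Parseval) frame vector for $\lambda_P$ and the analysis operator of $\{\lambda_P(s)P\delta_e\}$ has range exactly $\mathrm{range}(P)$, so applying Theorem~\ref{group-case} to $\lambda_P$ shows that centrality would force $P\in\mathcal{M}\cap\mathcal{M}'$. Since this fails by construction, $\lambda_P$ is not central, contradicting the hypothesis; hence $\mathcal{G}$ must be abelian.

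The supporting verifications are all standard: that co-invariance equals reducibility for a unitary group action, that $P\delta_e$ is a Parseval frame vector for $\lambda_P$ with range projection $P$, and that $\mathcal{M}\cap\mathcal{M}'$ is the common center of $\mathcal{M}$ and $\mathcal{M}'$ via the double commutant theorem. I expect the only genuinely delicate step to be the ``only if'' construction, where one must transfer the non-commutativity of $\mathcal{G}$ to the commutant $\mathcal{M}'$ in order to guarantee a non-central projection there, and then confirm that the resulting model space representation is a bona fide frame representation to which Theorem~\ref{group-case} applies.
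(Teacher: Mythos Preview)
Your ``if'' direction is exactly the paper's argument: the paper simply records that $\mathcal{M}=\mathcal{M}'$ when $\mathcal{G}$ is abelian and then invokes Theorem~\ref{group-case}, which is precisely what you do (with the added observation that $P\in\mathcal{M}'$ automatically, which is implicit in the paper's use of Theorem~\ref{group-case}).

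For the ``only if'' direction there is nothing to compare against: the paper does not prove it, it merely cites \cite{HL-MAMS} for the full biconditional. Your contrapositive construction is correct and self-contained. A couple of minor remarks: you do not actually need the anti-isomorphism $\mathcal{M}\cong\mathcal{M}'$ to see that $\mathcal{M}'$ is non-abelian --- since $\mathcal{M}'$ contains the right regular representation $\rho$, picking $s,t\in\mathcal{G}$ with $st\neq ts$ gives $\rho(s)\rho(t)\neq\rho(t)\rho(s)$ directly. Also, once $P\in\mathcal{M}'$ the verification that $\{P\lambda(s)\delta_e\}_{s\in\mathcal{G}}=\{P\delta_s\}_{s\in\mathcal{G}}$ is a Parseval frame for $\mathrm{range}(P)$ with analysis range exactly $\mathrm{range}(P)$ is immediate (it is the projection of an orthonormal basis), so you need not route this through Proposition~\ref{model}. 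With those simplifications your argument is clean and complete.
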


We remark that with a different approach, Aguilera et al.  also proved (among some other interesting results)  the above corollary for the special case when $\mathcal{S} = \Bbb{Z}\times \Bbb{Z}$ (Theorem 4.7.  \cite{Carlos-1}). 

Unlike the group case where the $wot$-closed algebra of the left regular representation is a self-adjoint  (i.e., $*$-closed ) operator algebra, the $wot$-closed algebra $\mathcal{A}_{\mathcal{S}}$ of the left regular representation $\lambda$ for a semigroup  is not necessarily self-adjoint if $\mathcal{S}$ is not a group.  In order to characterize central frame representations for arbitrary semigroups,  we need to examine the hyperinvariant subspaces of the algebra generated by the left regular representation. The study of hyperinvariant subspaces for  a  single operator has a very rich history with extensive research  activity occurring during the 1970's including a notable breakthrough in 1973  by Lomonosov on the existence of hyperinvariant subspaces for compact operators (see \cite{Douglas, Foias-1, Foias-2, Foias-3} and the references therein). Here we need to establish the existence of and/or provide characterizations for the hyperinvariant subspaces of the semigroup algebra $\mathcal{A}_{\mathcal{S}}$.

\begin{defi}  Let $H$ be a Hilbert space. An invariant subspace $M$ of a subalgebra $\mathcal{A}$ of $B(H)$ is called {\it hyperinvariant} if it is also invariant under $\mathcal{A}'$. We say a subspace $N \subseteq H$ is a  {\it co-hyperinvariant subspace} of $\mathcal{A}$  if $N^{\perp}$ is a hyperinvariant subspace of $\mathcal{A}$. 
\end{defi}

In the case that $\mathcal{A}$ is a von Neumann algebra, a subspace $M$ is hyperinvariant of $\mathcal{A}$ if and only if $P\in \mathcal{A}\cap \mathcal{A}'$, where $P$ is the orthogonal projection onto $M$. Moreover, $M$ is hyperinvariant if and only if $M$ is co-hyperinvariant. So Theorem \ref{group-case} can also be rephrased as: {\it A frame representation $\pi$ of a countable group $\mathcal{G}$ with a frame vector $\xi$ is central  if and only if $\Theta_{\xi}(H)$ is co-hyperinvariant.}  Therefore the main result of this section on the characterization of central frame representations for arbitrary semigroups is consistent with  the group case. To establish this result, we need the following lemma.

\begin{lem} \label{key-lem1} Let $\pi: \mathcal{S} \to B(H)$ be a frame representation of a unital semigroup $\mathcal{S}$ with the left cancellation property. Suppose that $\xi$ is a frame vector for $\pi$ such that $\Theta_{\xi}(H)$ is  co-hyperinvariant under the left regular representation. Then for every Bessel vector $\eta$ for $\pi$, there is a bounded linear operator $A\in \pi(\mathcal{S})'$ such that $\eta = A\xi$.
\end{lem}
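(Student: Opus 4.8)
The plan is to reduce the whole statement to one range inclusion and then to isolate the single genuinely hard input. Since $\xi$ is a frame vector, the analysis operator $\Theta_\xi$ is bounded below, so its adjoint $\Theta_\xi^*:\ell^2(\mathcal S)\to H$ is surjective with $\ker\Theta_\xi^*=\Theta_\xi(H)^\perp=:M^\perp$. As in the proof of Proposition \ref{model}, both $\Theta_\xi$ and $\Theta_\eta$ intertwine the adjoint actions, and equivalently the synthesis operators are intertwiners $\Theta_\zeta^*\lambda(s)=\pi(s)\Theta_\zeta^*$ with $\Theta_\zeta^*\delta_s=\pi(s)\zeta$ for $\zeta\in\{\xi,\eta\}$. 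I claim it suffices to prove $\mathrm{ran}\,\Theta_\eta\subseteq\Theta_\xi(H)=:M$, i.e. $M^\perp\subseteq\ker\Theta_\eta^*$. Granting this, the prescription $A(\Theta_\xi^* g):=\Theta_\eta^* g$ is unambiguous (because $\ker\Theta_\xi^*=M^\perp\subseteq\ker\Theta_\eta^*$) and, since $\Theta_\xi^*|_M$ is a bounded bijection onto $H$ and $\Theta_\eta^*$ kills $M^\perp$, one has $\|\Theta_\eta^* g\|=\|\Theta_\eta^*(Pg)\|\le c\,\|\Theta_\xi^* g\|$; thus $A$ is a well-defined bounded operator on $\mathrm{ran}\,\Theta_\xi^*=H$. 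From $A\Theta_\xi^*=\Theta_\eta^*$ and the intertwining relations, $A\pi(s)\Theta_\xi^*=A\Theta_\xi^*\lambda(s)=\Theta_\eta^*\lambda(s)=\pi(s)\Theta_\eta^*=\pi(s)A\Theta_\xi^*$, so surjectivity of $\Theta_\xi^*$ forces $A\in\pi(\mathcal S)'$, while $A\xi=A\Theta_\xi^*\delta_e=\Theta_\eta^*\delta_e=\eta$. Hence the lemma is equivalent to the inclusion $\mathrm{ran}\,\Theta_\eta\subseteq M$, which is where the hypothesis must be used.

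Next I would rephrase co-hyperinvariance as a commutation and reduce the inclusion to an equivariant lifting. The commutant $\mathcal A_{\mathcal S}'$ is a von Neumann algebra, and each of its elements is a right convolution $C_v$ with $C_v\delta_s=\lambda(s)v$ and $v=C_v\delta_e$ ranging over the Bessel vectors of $\lambda$, so $\mathcal A_{\mathcal S}'=\{C_v\}$. Co-hyperinvariance says $M^\perp$ is invariant under the self-adjoint algebra $\mathcal A_{\mathcal S}'$; hence $M$ reduces $\mathcal A_{\mathcal S}'$ and the projection $P$ onto $M$ lies in $(\mathcal A_{\mathcal S}')'$ (the von Neumann algebra generated by $\lambda$). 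In particular $P$ commutes with every $C_v$ and with every $C_v^*\in\mathcal A_{\mathcal S}'$. The inclusion $\mathrm{ran}\,\Theta_\eta\subseteq M$ then follows immediately once we realize $\Theta_\eta^*=\Theta_\xi^* C$ for some $C\in\mathcal A_{\mathcal S}'$: taking adjoints gives $\Theta_\eta=C^*\Theta_\xi$, so $\mathrm{ran}\,\Theta_\eta=C^*\,\mathrm{ran}\,\Theta_\xi=C^*M\subseteq M$, the last step because $C^*\in\mathcal A_{\mathcal S}'$ commutes with $P$. Testing $\Theta_\xi^* C_v=\Theta_\eta^*$ on the basis (using $\Theta_\xi^* C_v\delta_s=\pi(s)\Theta_\xi^* v$) shows it is equivalent to the single equation $\Theta_\xi^* v=\eta$. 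Thus everything reduces to lifting the $\pi$-Bessel vector $\eta$ to a $\lambda$-Bessel vector $v$ with $\Theta_\xi^* v=\eta$.

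This lifting is the crux, and I expect it to be the only hard step. Here $\Theta_\xi^*$ is a surjective intertwiner from the regular representation $\lambda$ onto $\pi$; because $\delta_e$ is a complete wandering (cyclic) vector, intertwiners out of $\lambda$ are the same data as Bessel vectors of the target, and $\Theta_\xi^*$ has bounded (non-equivariant) sections such as $\Theta_\xi S_\xi^{-1}$, so preimages $v_0$ of $\eta$ certainly exist in $\ell^2(\mathcal S)$. The obstacle is that a generic preimage need not be a Bessel vector of $\lambda$ (equivalently, $C_{v_0}$ need not be bounded), so one must correct $v_0$ by a vector of $\ker\Theta_\xi^*=M^\perp$ so as to land in the Bessel class while keeping the associated convolution inside the commutant — an equivariant lifting (projectivity) problem for $\lambda$. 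I expect both hypotheses to be spent precisely here: the Bessel bound on $\eta$ should supply the Douglas-type majorization that keeps the corrected vector $\lambda$-Bessel, and co-hyperinvariance ($P\in(\mathcal A_{\mathcal S}')'$, i.e. $\mathcal A_{\mathcal S}'$ reduces $M$) should be what confines the correction to the commutant. This step is the semigroup substitute for the two classical mechanisms behind Theorem \ref{Ole} and Theorem \ref{group-case}: for $\mathcal S=\mathbb Z_+$ the lift is produced by the Sarason/Beurling description of the relevant commutant through $H^\infty$ symbols, and for a group it is produced by averaging a bounded section over the group to make it equivariant. I would therefore structure the write-up so that, after the two reductions above, the entire remaining work is the explicit construction of $v$, treating it as the technical heart of the lemma.
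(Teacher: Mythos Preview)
Your second paragraph contains a genuine error: $\mathcal{A}_{\mathcal{S}}'$ is \emph{not} a von Neumann algebra, because $\lambda(\mathcal{S})$ is not a self-adjoint set. Already for $\mathcal{S}=\mathbb{Z}_+$ one has $\mathcal{A}_{\mathcal{S}}'=\{S\}'\cong H^\infty(\mathbb{D})$ acting on $H^2$, which is not closed under adjoints. Consequently the claims that $M$ reduces $\mathcal{A}_{\mathcal{S}}'$, that $P\in(\mathcal{A}_{\mathcal{S}}')'$, and that $C^*\in\mathcal{A}_{\mathcal{S}}'$ are all unfounded. Co-hyperinvariance gives only that $M^\perp$ is $\mathcal{A}_{\mathcal{S}}'$-invariant, i.e.\ $C(M^\perp)\subseteq M^\perp$ for every $C\in\mathcal{A}_{\mathcal{S}}'$; fortunately this inclusion is \emph{equivalent} to $C^*M\subseteq M$, so your conclusion $\mathrm{ran}\,\Theta_\eta=C^*M\subseteq M$ survives once such a $C$ exists---but via the correct one-sided argument, not via a nonexistent commutation of $P$ with $C$.

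More seriously, you never construct the lift: you explicitly defer the production of a $\lambda$-Bessel $v$ with $\Theta_\xi^*v=\eta$, so the proof is incomplete. The paper does not regard this as the hard step at all. It simply takes $v=\phi:=T\eta\in M$, where $T:H\to M$ is the frame equivalence from Proposition~\ref{model} (concretely $T=\Theta_\xi S_\xi^{-1}$, so $\Theta_\xi^*\phi=\eta$ automatically), transfers the Bessel bound on $\eta$ to a $\lambda_P$-Bessel bound on $\phi$, and passes from there to $\phi$ being $\lambda$-Bessel, so that $U=C_\phi\in\mathcal{A}_{\mathcal{S}}'$. Co-hyperinvariance is then invoked exactly once, to obtain $PU=PUP$ (equivalently $U(M^\perp)\subseteq M^\perp$), which is precisely the step you already carried out in your second reduction. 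Thus your expectation that co-hyperinvariance will be ``spent'' inside the lifting is misplaced: you have already used that hypothesis, and only the Bessel bound on $\eta$ is available for manufacturing $v$. In short, your first reduction is fine, your second reduction reaches the right conclusion by the wrong route, and the part you label the technical heart is exactly what the paper supplies and you omit.
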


\begin{proof} Let $P$ be the orthogonal projection onto the range  $\Theta_{\xi}(H)$ of the analysis operator of the frame representation. By Proposition \ref{model}, $\{\pi(s)\xi\}_{s\in\mathcal{S}}$ and $\{\lambda_{P}(s)\psi\}_{s\in\mathcal{S}}$ are equivalent frames, where  $\psi = P\delta_{e}$. Let $T: H\to \Theta_{\xi}(H)$ be the bounded invertible linear operator such that $T\pi(s)\xi = \lambda_{P}(s)\psi$ for every $s\in\mathcal{S}$. Then for any $s, t\in\mathcal{S}$, we have
\begin{eqnarray*}
T\pi(s)(\pi(t)\xi) &= &T\pi(st)\xi = \lambda_{p}(st)\psi = \lambda_{P}(s)\lambda_{P}(t)\psi \\
&=& \lambda_{P}(s)T(\pi(t)\xi).
\end{eqnarray*}
Thus, $T\pi(s) = \lambda_{P}(s)T$ for all $s\in\mathcal{S}$. Let $\phi = T\eta$. Then $\phi\in \Theta_{\xi}(H)$ and for every $u\in \Theta_{\xi}(H)$ we have
\begin{eqnarray*}
\sum_{s\in \mathcal{S}} |\langle u, \lambda_{P}(s) \phi\rangle |^2 &=& \sum_{s\in \mathcal{S}} |\langle u, \lambda_{P}(s) T\eta\rangle |^2  \\
&=&  \sum_{s\in \mathcal{S}} |\langle u, T\pi(s) \eta\rangle |^2 \leq B||T^*||^2 ||u||^2
\end{eqnarray*}
where $B$ is the Bessel bound for $\{\pi(s)\eta\}_{s\in\mathcal{S}}$. Thus, $\phi$ is a Bessel vector of $\lambda_P$ and consequently of $\lambda$ as well. Define $U: \ell^2(\mathcal{S}) \to \ell^2(\mathcal{S})$ by $U(\delta_s) = \lambda(s)\phi$. Then for all $s, t\in \mathcal{S}$ we have
\begin{eqnarray*}U\lambda(s)(\lambda(t)\delta_e) &= & U\lambda(st)\delta_e= \lambda(st)\phi \\
&= & \lambda(s)\lambda(t)\phi = \lambda(s) U\lambda(t)\delta_e.
\end{eqnarray*}
Thus $U\in \mathcal{A}_{\mathcal{S}}'$.   Define $A = T^{-1}PUPT$. Since $\Theta_{\xi}(H)$ is  co-hyperinvariant, we have $PUP = PU$ and $P\lambda(s) = P\lambda(s)P$ for every $s\in\mathcal{S}$. Thus we get
\begin{eqnarray*}
A\pi(s) &=& T^{-1}PUPT\pi(s) = T^{-1}PUP\lambda_{P}(s)T \\
& = &T^{-1} (PUP)(P\lambda(s)P)T = T^{-1} PU \lambda(s)T\\
&=& T^{-1} P \lambda(s) UT  =  T^{-1} (P \lambda(s)P)( P U P)T\\
&=&T^{-1}\lambda_{P}(s) PUPT = \pi(s)T^{-1}PUPT =\pi(s)A.
\end{eqnarray*}
Thus $A\in \pi(\mathcal{S})'$. Moreover, we also have 
 \begin{eqnarray*}
    A\xi  &= & T^{-1}PUPT\xi = T^{-1}PUP\psi = T^{-1}PUP (P\delta_e)\\
    &=& T^{-1}PU\delta_{e} = T^{-1}P\phi = T^{-1} \phi\\
    &=& T^{-1}(T\eta) = \eta,
 \end{eqnarray*}
which completes the proof.
\end{proof}

\begin{theo} \label{main-thm1} Let $\pi: \mathcal{S} \to B(H)$ be a frame representation of a unital semigroup $\mathcal{S}$ with left cancellation property. Then the following statements are equivalent:

\begin{itemize}

\item[(i)] $\pi$ is a central frame representation.

\item[(ii)] The range space of the analysis operator  $\Theta_\xi$ of every frame vector $\xi$  for $\pi$ is a  co-hyperinvariant subspace of $\mathcal{A}_{\mathcal{S}}$.

\end{itemize}
\end{theo}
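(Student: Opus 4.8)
The plan is to prove the two implications separately, treating (ii)~$\Rightarrow$~(i) as the short direction that follows immediately from Lemma~\ref{key-lem1}, and reserving the real work for (i)~$\Rightarrow$~(ii). Throughout I would first use Proposition~\ref{model} together with the preceding lemma to replace $\pi$ by its model space representation: writing $N = \Theta_{\xi}(H)$, letting $P$ be the orthogonal projection onto $N$, and setting $\psi = P\delta_{e}$, the representation $\lambda_{P}(s) = P\lambda(s)P$ on $N$ is equivalent to $\pi$, is central exactly when $\pi$ is (by the Lemma), and carries the frame $\{P\delta_{s}\}_{s\in\mathcal{S}}$, since $P\lambda(s) = P\lambda(s)P$ gives $\lambda_{P}(s)\psi = P\delta_{s}$. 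Because condition (ii) is a statement purely about the subspace $N \subseteq \ell^{2}(\mathcal{S})$ and the algebra $\mathcal{A}_{\mathcal{S}}$, this reduction loses nothing. I would also record that Proposition~\ref{model} already supplies half of co-hyperinvariance for free: $N^{\perp}$ is invariant under every $\lambda(s)$, hence under all of $\mathcal{A}_{\mathcal{S}}$, so only invariance of $N^{\perp}$ under the commutant $\mathcal{A}_{\mathcal{S}}'$ remains to be verified.

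For (ii)~$\Rightarrow$~(i), I would take two frame vectors $\xi,\eta$. Each is in particular a Bessel vector, and both $\Theta_{\xi}(H)$ and $\Theta_{\eta}(H)$ are co-hyperinvariant by (ii), so Lemma~\ref{key-lem1} applied in each direction produces $A,B \in \pi(\mathcal{S})'$ with $A\xi = \eta$ and $B\eta = \xi$. The key observation is then that $AB$ fixes every vector of the frame $\{\pi(s)\eta\}$: indeed $AB\pi(s)\eta = A\pi(s)B\eta = A\pi(s)\xi = \pi(s)A\xi = \pi(s)\eta$, and since a frame spans $H$ this forces $AB = I$; symmetrically $BA = I$. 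Thus $A$ is invertible, lies in $\pi(\mathcal{S})'$, and sends $\xi$ to $\eta$, so $\xi$ and $\eta$ are equivalent and $\pi$ is central.

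The substantial direction is (i)~$\Rightarrow$~(ii), carried out in the model representation. The first step upgrades centrality from frame vectors to Bessel vectors by perturbation: if $\eta$ is any Bessel vector of $\lambda_{P}$, then $\psi + t\eta$ is again a frame vector for all sufficiently small $t$, because its analysis operator is a small perturbation of the bounded-below $\Theta_{\psi}$; centrality then yields an invertible $T_{t} \in \lambda_{P}(\mathcal{S})'$ with $T_{t}\psi = \psi + t\eta$, whence $\eta = t^{-1}(T_{t}-I)\psi = A\psi$ for some $A \in \lambda_{P}(\mathcal{S})'$, and so $\Theta_{\eta} = \Theta_{\psi}A^{*}$, giving $\mathrm{range}(\Theta_{\eta}) \subseteq \mathrm{range}(\Theta_{\psi}) = N$. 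The second and decisive step manufactures, out of an arbitrary $U \in \mathcal{A}_{\mathcal{S}}'$, a Bessel vector whose analysis range detects how $U$ acts across $N$. I would set $\eta = PU\delta_{e}$ and compute $\Theta_{\eta}$ on $u \in N$: writing $\langle u, \lambda_{P}(s)\eta\rangle = \langle u, \lambda(s)PU\delta_{e}\rangle$ and splitting $U\delta_{e} = PU\delta_{e} + (I-P)U\delta_{e}$, the $N^{\perp}$-part contributes nothing since $N^{\perp}$ is $\lambda(s)$-invariant and $u \in N$, while the remaining term equals $\langle u, \lambda(s)U\delta_{e}\rangle = \langle u, U\delta_{s}\rangle = \langle U^{*}u, \delta_{s}\rangle$; hence $\Theta_{\eta} = U^{*}|_{N}$. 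Combining the two steps, $U^{*}(N) = \mathrm{range}(\Theta_{\eta}) \subseteq N$, so $N$ is invariant under $U^{*}$, which is precisely the assertion that $N^{\perp}$ is invariant under $U$. As $U$ was arbitrary, $N^{\perp}$ is hyperinvariant and $N$ is co-hyperinvariant.

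I expect the main obstacle to be this second step: choosing the right Bessel vector $PU\delta_{e}$ to extract information about an arbitrary commutant element, and verifying the identity $\Theta_{\eta} = U^{*}|_{N}$. The delicate point is that $\mathcal{A}_{\mathcal{S}}'$ is \emph{not} self-adjoint when $\mathcal{S}$ is a genuine semigroup, so one must track the asymmetry between $\lambda(s)$ and $\lambda(s)^{*}$ with care: the cancellation of the cross term relies specifically on $N^{\perp}$ being $\lambda(s)$-invariant (equivalently, $N$ being $\lambda(s)^{*}$-invariant), and the conclusion ``$N$ invariant under $U^{*}$'' must be transported, by taking adjoints, into ``$N^{\perp}$ invariant under $U$''. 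Once this bookkeeping is handled the argument closes cleanly, and, in contrast to the von Neumann algebra setting underlying Theorem~\ref{group-case}, no self-adjointness of the ambient algebra is required.
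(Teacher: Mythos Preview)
Your proof is correct; the direction (ii)~$\Rightarrow$~(i) matches the paper's argument essentially verbatim. In the direction (i)~$\Rightarrow$~(ii), however, you take a genuinely different route. The paper handles an arbitrary $A \in \mathcal{A}_{\mathcal{S}}'$ by first replacing $A$ with $A + cI$ for $c > \|A\|$, which makes $A$ invertible; then $\eta := PA\delta_{e}$ is automatically a \emph{frame} vector for $\lambda_{P}$ (since $\{A\lambda(s)\delta_{e}\}$ is a Riesz basis), so centrality applies directly without any perturbation. The paper then produces $T \in \lambda_{P}(\mathcal{S})'$ with $T\psi = \eta$, sets $V = PTP$, and verifies $PA\delta_{s} = V\delta_{s}$ on basis vectors, whence $PA = V = VP = PAP$. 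Your argument instead (a) upgrades centrality to all Bessel vectors via the perturbation $\psi + t\eta$, and (b) identifies the analysis operator of $\eta = PU\delta_{e}$ explicitly as $\Theta_{\eta} = U^{*}|_{N}$, concluding $U^{*}(N) \subseteq N$ from the range inclusion. The paper's $+\,cI$ trick is slicker and sidesteps perturbation entirely; your approach is more conceptual, the formula $\Theta_{PU\delta_{e}} = U^{*}|_{N}$ is illuminating, and your ``Bessel upgrade'' in step~(a) is essentially the content of the paper's subsequent Corollary~2.7 (equivalence of co-hyperinvariance with the Bessel range inclusion), so you are in effect proving that corollary en route to the theorem.
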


\begin{proof}

 $(i)\Rightarrow (ii)$ Assume that  $\pi$ is a central frame representation. Let $\xi$ be a frame generator of $\pi$ and $P$ be the orthogonal projection onto $\Theta_{\xi}(H)$. Then, by Proposition \ref{model}, we know that $\Theta_{\xi}(H)$ is co-invariant under $\mathcal{A}_{\mathcal{S}}$, i.e.,  $PA = PAP$ for every $A\in  \mathcal{A}_{\mathcal{S}}$. We need to show that this also holds for every  $A\in \mathcal{A}_{S}'$. 

Let $A\in \mathcal{A}_{S}'$ be arbitrary. Since $\mathcal{A}_{S}'$ contains the identity operator $I$, by replacing $A$ with $A + c I$ for  some  $c> ||A||$, we can assume that $A$ is invertible.
 Since $A\in \mathcal{A}_{S}'$ is invertible, we get that $\{\lambda(s)A\delta_{e}\}_{s\in\mathcal{S}}$ $= \{A\lambda(s)\delta_{e}\}_{s\in\mathcal{S}}$ is a Riesz basis for $\ell^{2}(\mathcal{S})$ and hence $\{P\lambda(s)A\delta_{e}\}_{s\in\mathcal{S}}  = \{P\lambda(s)PA\delta_{e}\}_{s\in\mathcal{S}} $ is a frame for $P(H)$. Let $\psi = P\delta_{e}$ and $\eta = PA\delta_e$. Then both $\psi$ and $\eta$ are frame vectors for the frame representation $\lambda_{P}$. Since $\pi$ is a central frame representation that is equivalent to $\lambda_P$ by Proposition \ref{model}, we get $\lambda_{P}$ is central. Thus, there exists an invertible operator $T\in \{P\lambda(s)P: s\in S\}'$ such that 
$$
T\psi= \eta.
$$
Define $V = PTP\in B(\ell^2(S))$. Then we have
$$V\delta_e= PTP\delta_e = PT\psi =T\psi = \eta =  PA\delta_e.$$
Thus, from $P\lambda(s) = P\lambda(s)P$ for every $s\in\mathcal{S}$,  we get
\begin{eqnarray*}
P\lambda(s)PV\delta_e &=& (P\lambda(s)P)PA\delta_e = P\lambda(s)PA\delta_e \\
& =& P\lambda(s)A\delta_e = PA\lambda(s)\delta_e\\
& =& PA\delta_s
\end{eqnarray*}
for all $s\in\mathcal{S}$. On the other hand, since $T$ commutes with $\lambda_{P}$, we have 
$$
VP\lambda(s)P = PT\lambda_{P}(s) = \lambda_{P}(s)T = P\lambda(s)PV.
$$
Hence, we also have
\begin{eqnarray*}
P\lambda(s)PV\delta_e &=& VP\lambda(s)P\delta_e = VP\lambda(s)\delta_e \\
&=& V\lambda(s)\delta_e = V\delta_s
\end{eqnarray*}
for all $s\in\mathcal{S}$ where we use the fact that $VP = V$. Therefore, we have proved that $PA\lambda(s)\delta_e = V\delta_s$ for all $s\in\mathcal{S}$, which implies that 
$PA = V$. Hence, we get  $PAP = VP = V =PA$ which implies that $P$ is  co-hyperinvariant as $A\in \mathcal{A}_{S}'$ was arbitrary.

$(ii) \Rightarrow (i)$ Let $\xi$ and $\eta$ be two frame vectors for $\pi$.  Then, by assumption,  both $\Theta_{\xi}$ and $\Theta_{\eta}$ are  co-hyperinvariant subspaces of the left regular representation. Thus, by Lemma \ref{key-lem1}, there exist two operators $A, B\in \pi(\mathcal{S})'$ such that $A\xi = \eta$ and $B\eta = \xi$.
So,  for every $s\in\mathcal{S}$,   we get
    $$
    BA \pi(s) \xi = \pi(s) BA\xi =\pi(s)\xi
    $$
    and 
    $$
    AB \pi(s) \eta = \pi(s) AB\eta =\pi(s)\eta
    $$
  This implies that  $BA = I =AB$. Hence $A$ is invertible and consequently $\{\pi(s)\xi\}_{s \in S}$ and $\{\pi(s)\eta\}_{s \in S}$ are equivalent frames. Therefore $\pi$ is central.
\end{proof}

\begin{rem} In the group case, by Theorem \ref{group-case},   we only need to establish the co-hyperinvariance  of $\Theta_{\xi}$$(H)$ for a  fixed  frame vector $\xi$ to characterize the central frame representation. In that case, the theorem was proved by using the finite von Neumann algebra property of the algebra generated by the left regular representation. However, we do not have the same statement for general semigroup central frame representations. This case  seems to require a von Neumann algebra type of finiteness property  for the non-self-adjoint algebra $\mathcal{A}_{\mathcal{S}}'$.
\end{rem}

The proof of Theorem \ref{main-thm1} clearly implies the following corollaries.

\begin{coro}
Let $\pi: S \to B(H)$ be a frame representation of a countable unital semigroup $S$ with the left-cancellation property, and $\xi, \eta\in H$ be any two frame vectors of $\pi$. If $\Theta_{\xi}(H)$ and $\Theta_{\eta}(H)$ are both  co-hyperinvariant, then $\{\pi(s)\xi\}_{s \in S}$ and $\{\pi(s)\eta\}_{s \in S}$ are equivalent frames.
 \end{coro}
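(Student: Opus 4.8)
The plan is to derive this corollary directly from Lemma~\ref{key-lem1}, essentially recycling the $(ii)\Rightarrow(i)$ argument of Theorem~\ref{main-thm1} but applied to just the two given frame vectors rather than to all frame vectors simultaneously. The observation that makes this work is that the co-hyperinvariance hypothesis is imposed on \emph{both} $\Theta_{\xi}(H)$ and $\Theta_{\eta}(H)$, which lets me run Lemma~\ref{key-lem1} symmetrically in both directions.

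First I would note that since $\xi$ and $\eta$ are frame vectors for $\pi$, they are in particular Bessel vectors. Applying Lemma~\ref{key-lem1} with $\xi$ as the frame vector whose analysis range $\Theta_{\xi}(H)$ is co-hyperinvariant and with $\eta$ playing the role of the Bessel vector, I obtain an operator $A\in\pi(\mathcal{S})'$ with $A\xi=\eta$. Then I would invoke the lemma a second time with the roles reversed: since $\Theta_{\eta}(H)$ is co-hyperinvariant and $\xi$ is Bessel, there is $B\in\pi(\mathcal{S})'$ with $B\eta=\xi$.

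The remaining step is to show that $A$ is invertible. Since $A,B\in\pi(\mathcal{S})'$, so are the composites $BA$ and $AB$, and for every $s\in\mathcal{S}$ I would compute
\[
BA\,\pi(s)\xi=\pi(s)\,BA\,\xi=\pi(s)B\eta=\pi(s)\xi,
\qquad
AB\,\pi(s)\eta=\pi(s)\,AB\,\eta=\pi(s)A\xi=\pi(s)\eta.
\]
Because $\{\pi(s)\xi\}_{s\in\mathcal{S}}$ and $\{\pi(s)\eta\}_{s\in\mathcal{S}}$ are frames, they are complete in $H$, so these identities force $BA=I$ and $AB=I$ on all of $H$. Hence $A$ is a bounded invertible operator in $\pi(\mathcal{S})'$ with $A\xi=\eta$, which by definition means that $\{\pi(s)\xi\}_{s\in\mathcal{S}}$ and $\{\pi(s)\eta\}_{s\in\mathcal{S}}$ are equivalent frames.

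I do not anticipate a genuine obstacle here, since the heavy lifting is already done in Lemma~\ref{key-lem1}; the only points requiring care are that one must assume co-hyperinvariance of both ranges, and not just one, in order to produce both $A$ and $B$, and that it is the completeness of the frames, rather than the mere Bessel property, that upgrades the fixed-point relations $BA\xi=\xi$ and $AB\eta=\eta$ to the global identities $BA=AB=I$.
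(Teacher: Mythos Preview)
Your proposal is correct and is essentially the paper's own argument: the corollary is obtained by rerunning the $(ii)\Rightarrow(i)$ direction of Theorem~\ref{main-thm1}, applying Lemma~\ref{key-lem1} once in each direction to produce $A,B\in\pi(\mathcal{S})'$ with $A\xi=\eta$, $B\eta=\xi$, and then using completeness of the two frames to conclude $BA=AB=I$.
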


Similarly, from the proofs or with minor modifications of the proofs for  Lemma \ref{key-lem1} and Theorem \ref{main-thm1}, we also obtain the following statement.

\begin{coro} Let $\pi: S \to B(H)$ be a frame representation of a countable unital semigroup $S$ with the left-cancellation property, and $\xi\in H$ be a frame vector of $\pi$. Then the following are equivalent:

    (i) $\Theta_{\xi}(H)$ is co-hyperinvariant under the left regular representation. 

    (ii) $\Theta_{x}(H) \subseteq \Theta_{\xi}(H)$ holds for every Bessel vector $x$ of $\pi$, or equivalently, for every Bessel vector $x$, there exists a unique $A \in \{\pi(S)\}'$
    such that   $x = A\xi$.

    (iii)  $\Theta_{\eta}(H) \subseteq \Theta_{\xi}(H)$ holds for every frame vector $\eta$ for  $\pi$,   or equivalently,  for every frame vector $\eta$, there exists a unique $A \in \{\pi(S)\}'$
    such that   $\eta= A\xi$.
    
\end{coro}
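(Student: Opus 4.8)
The plan is to prove the cycle of implications $(i)\Rightarrow(ii)\Rightarrow(iii)\Rightarrow(i)$, after first disposing of the two internal ``or equivalently'' reformulations, which are general facts not requiring co-hyperinvariance. Concretely, I would first record the following dictionary: for a frame vector $\xi$ and any Bessel vector $\zeta$ of $\pi$, one has $\Theta_\zeta(H)\subseteq \Theta_\xi(H)$ if and only if there is a (necessarily unique) $A\in\pi(\mathcal{S})'$ with $\zeta=A\xi$. The backward direction is immediate: if $\zeta=A\xi$ with $A\in\pi(\mathcal{S})'$, then a one-line computation gives $\Theta_\zeta=\Theta_\xi A^*$, so $\Theta_\zeta(H)\subseteq\Theta_\xi(H)$. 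For the forward direction I would use that $\Theta_\xi$ is bounded below (as $\xi$ is a frame vector), hence invertible onto its closed range $\Theta_\xi(H)$, and set $B=\Theta_\xi^{-1}\Theta_\zeta$, so that $\Theta_\xi B=\Theta_\zeta$. Applying the intertwining identity $\lambda(s)^*\Theta_\zeta=\Theta_\zeta\pi(s)^*$ --- valid for every Bessel vector exactly as in the proof of Proposition~\ref{model}(i) --- to both $\xi$ and $\zeta$ and cancelling the injective $\Theta_\xi$ yields $\pi(s)^*B=B\pi(s)^*$ for all $s$. The one genuinely fiddly point here is the adjoint bookkeeping, which forces us to set $A:=B^*\in\pi(\mathcal{S})'$ and then to check $\zeta=A\xi$ by comparing $\Theta_\zeta=\Theta_\xi A^*=\Theta_{A\xi}$ and evaluating the $s=e$ coordinate. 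Uniqueness of $A$ follows because any $C\in\pi(\mathcal{S})'$ annihilating $\xi$ annihilates every $\pi(s)\xi$ and hence vanishes by completeness of the frame. This dictionary turns each of (ii) and (iii) into a purely operator-theoretic statement.

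With the dictionary in hand, $(i)\Rightarrow(ii)$ is essentially Lemma~\ref{key-lem1}: co-hyperinvariance of $\Theta_\xi(H)$ produces, for every Bessel vector $x$, an operator $A\in\pi(\mathcal{S})'$ with $x=A\xi$, which by the dictionary is the same as $\Theta_x(H)\subseteq\Theta_\xi(H)$. The implication $(ii)\Rightarrow(iii)$ is trivial, since every frame vector is in particular a Bessel vector.

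The substantive implication is $(iii)\Rightarrow(i)$, and my plan is to adapt the argument of $(i)\Rightarrow(ii)$ in Theorem~\ref{main-thm1}. Using Proposition~\ref{model} together with the fact that all three conditions are invariant under representation equivalence, I would reduce to the model case $\pi=\lambda_P$, $\xi=\psi=P\delta_e$, where $P$ is the projection onto $\Theta_\xi(H)=M$. Since $M$ is already co-invariant under $\mathcal{A}_{\mathcal{S}}$ by Proposition~\ref{model}(i), co-hyperinvariance amounts to proving $PB=PBP$ for every $B\in\mathcal{A}_{\mathcal{S}}'$; by the standard trick of replacing $B$ with $B+cI$ (legitimate since $P(cI)=cP=P(cI)P$) we may assume $B$ invertible. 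Then $\{B\delta_s\}_s$ is a Riesz basis of $\ell^2(\mathcal{S})$, and its compression $\{PB\delta_s\}_s=\{\lambda_P(s)\eta\}_s$, with $\eta:=PB\delta_e\in M$, is a frame for $M$, so $\eta$ is a frame vector of $\lambda_P$.

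Here is the key observation that makes (iii) --- rather than full centrality --- suffice, and which I expect to be the main obstacle to get right. Hypothesis (iii), via the dictionary, supplies a single operator $C\in\lambda_P(\mathcal{S})'$ with $C\psi=\eta$, and crucially $C$ need not be invertible. Setting $V=PCP$, I would verify the three properties $V\delta_e=\eta=PB\delta_e$, $VP=V$, and $V\lambda_P(s)=\lambda_P(s)V$, and then run verbatim the computation from Theorem~\ref{main-thm1}: evaluating $\lambda_P(s)V\delta_e$ in two ways gives $PB\delta_s=V\delta_s$ for all $s$, whence $PB=V$ and therefore $PBP=VP=V=PB$. The point to confirm carefully is that the derivation of $PA=V$ in Theorem~\ref{main-thm1} never used invertibility of the intertwiner --- it only needed an intertwiner sending $\psi$ to $\eta$ together with co-invariance of $P$ and $B\in\mathcal{A}_{\mathcal{S}}'$ --- which is exactly why the mere containment hypothesis (iii), yielding a possibly non-invertible $C$, is enough where Theorem~\ref{main-thm1} invoked the stronger centrality hypothesis. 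Assembling $(i)\Rightarrow(ii)\Rightarrow(iii)\Rightarrow(i)$ then completes the proof.
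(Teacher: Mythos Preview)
Your proposal is correct and follows essentially the same approach the paper indicates, namely adapting the proofs of Lemma~\ref{key-lem1} (for $(i)\Rightarrow(ii)$) and the $(i)\Rightarrow(ii)$ direction of Theorem~\ref{main-thm1} (for $(iii)\Rightarrow(i)$); the paper itself gives no detailed argument beyond this reference. Your explicit dictionary establishing the ``or equivalently'' reformulations and your observation that the intertwiner $C$ in the $(iii)\Rightarrow(i)$ step need not be invertible are precisely the minor modifications the paper has in mind.
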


\section{Dynamical frames by commutative semigroups}

With the aim of generalizing Theorem \ref{Ole}  at a minimum to  the case when $\mathcal{S} = \Bbb{Z}_{+}^k$ for $k > 1$, this section will be focused on discussing the central frame representation property for commutative semigroups. 

Recall that an abelian subalgebra $\mathcal{A}$ of $B(H)$ is called {\it maximal abelian} if $\mathcal{A} = \mathcal{A}'$, or equivalently, $\mathcal{A}$ is not a proper subalgebra of any abelian algebras of $B(H)$. Clearly, if the $wot$-closed algebra $\mathcal{A}_{\mathcal{S}}$ generated by the left regular representation $\lambda$  of a commutative semigroup $\mathcal{S}$ is maximal abelian, then every co-invariant subspace of $\lambda$ is co-hyperinvariant. This immediately, by Theorem \ref{main-thm1} and Proposition \ref{model},  implies the following

\begin{prop} \label{prop-abelian}  Let $\mathcal{S}$ be a unital commutative semigroup with the cancellation property such that $\mathcal{A}_{\mathcal{S}}$ is maximal abelian. Then every frame representation of $\mathcal{S}$ is central. 
\end{prop}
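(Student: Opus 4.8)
The plan is to reduce the statement directly to the characterization of central frame representations supplied by Theorem \ref{main-thm1}, so that the only substantive point becomes upgrading co-invariance to co-hyperinvariance using the maximal abelian hypothesis. By Theorem \ref{main-thm1}, it suffices to show that for an arbitrary frame representation $\pi$ of $\mathcal{S}$ and an arbitrary frame vector $\xi$ for $\pi$, the range space $\Theta_{\xi}(H)$ is a co-hyperinvariant subspace of $\mathcal{A}_{\mathcal{S}}$.

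First I would invoke Proposition \ref{model}(i) to obtain that $\Theta_{\xi}(H)$ is co-invariant under the left regular representation $\lambda$; equivalently, $\Theta_{\xi}(H)^{\perp}$ is invariant under every $\lambda(s)$. Since invariance of a fixed closed subspace is preserved under linear combinations and under weak operator limits (a closed subspace being weakly closed), this invariance passes to the entire $wot$-closed algebra $\mathcal{A}_{\mathcal{S}}$ generated by $\{\lambda(s)\}$. Thus $\Theta_{\xi}(H)^{\perp}$ is an invariant subspace of $\mathcal{A}_{\mathcal{S}}$.

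Second, I would bring in the hypothesis that $\mathcal{A}_{\mathcal{S}}$ is maximal abelian, that is, $\mathcal{A}_{\mathcal{S}}' = \mathcal{A}_{\mathcal{S}}$. Because $\Theta_{\xi}(H)^{\perp}$ is invariant under $\mathcal{A}_{\mathcal{S}}$ and $\mathcal{A}_{\mathcal{S}}' = \mathcal{A}_{\mathcal{S}}$, it is automatically invariant under $\mathcal{A}_{\mathcal{S}}'$ as well. By the definition of hyperinvariance this means $\Theta_{\xi}(H)^{\perp}$ is hyperinvariant, i.e. $\Theta_{\xi}(H)$ is co-hyperinvariant. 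Since $\xi$ and $\pi$ were arbitrary, condition (ii) of Theorem \ref{main-thm1} holds for every frame representation of $\mathcal{S}$, and hence every such representation is central.

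I do not expect a genuine obstacle here: once the machinery of Theorem \ref{main-thm1} and Proposition \ref{model} is in place, the argument is a definition chase, and the single place where the maximal abelian hypothesis is actually used is the identity $\mathcal{A}_{\mathcal{S}}' = \mathcal{A}_{\mathcal{S}}$ that collapses the distinction between invariance and hyperinvariance. The only minor care needed is the routine verification that co-invariance under the generating isometries $\lambda(s)$ coincides with co-invariance under the whole algebra $\mathcal{A}_{\mathcal{S}}$, which follows from the weak closedness of the invariant subspace $\Theta_{\xi}(H)^{\perp}$.
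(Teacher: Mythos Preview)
Your proposal is correct and follows essentially the same route as the paper: the paper observes that if $\mathcal{A}_{\mathcal{S}}$ is maximal abelian then every co-invariant subspace of $\lambda$ is automatically co-hyperinvariant, and then invokes Proposition~\ref{model} and Theorem~\ref{main-thm1} exactly as you do. Your write-up merely spells out the routine passage from invariance under the generators $\lambda(s)$ to invariance under the $wot$-closed algebra, which the paper leaves implicit.
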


 It is well known that every $wot$-closed self-adjoint abelian subalgebra $\mathcal{A}$ of $B(H)$ with a cyclic vector  $\xi$ (i.e., $\overline{span}\mathcal{A}\xi = H$)  is maximal abelian (c.f. Corollary 7.2.16, \cite{Kadison}), and this is no longer true for non-self-adjoint algebras (c.f. Example 1,  \cite{Deddens-PAMS}). However,  we believe that  $\mathcal{A}_{\mathcal{S}}$ is always maximal abelian (and hence every frame representation is central) for every unital commutative semigroup $\mathcal{S}$ with the left cancellation property. In fact we make the following conjecture.

\vspace{3mm}

\begin{conjecture} If $\mathcal{S}$ is a unital commutative semigroup with (or without) the cancellation property, then the $wot$-closed subalgebra $\mathcal{A}$ generated by any  frame representation $\pi$ of $\mathcal{S}$ is maximal abelian.
\end{conjecture}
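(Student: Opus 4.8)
The plan is to first reduce the statement to a concrete ``all multipliers are analytic'' assertion, and then to attack that assertion through a Hardy-space model of $\ell^2(\mathcal{S})$. By Proposition~\ref{model} every frame representation is equivalent to a model space representation $\lambda_P$ on a co-invariant subspace $M = \Theta_{\xi}(H)$ of the left regular representation, and since a similarity carries an algebra and its commutant to their conjugates, $\mathcal{A}_{\pi} = \mathcal{A}_{\pi}'$ holds if and only if $\mathcal{A}_{\lambda_P} = \mathcal{A}_{\lambda_P}'$; moreover the frame vector makes $\delta_e$ (respectively $P\delta_e$) cyclic for the generated algebra, so $\mathcal{A} \subseteq \mathcal{A}'$ is automatic and only the reverse inclusion is at issue. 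The first concrete move is to observe that in the wandering case $\pi = \lambda$ every $V \in \mathcal{A}_{\mathcal{S}}'$ is the ``right convolution'' operator $U_{\phi}\colon \delta_s \mapsto \lambda(s)\phi$ determined by $\phi = V\delta_e$, and that every element $W$ of $\mathcal{A}_{\mathcal{S}}$ has the same form $W = U_{W\delta_e}$ (because $W$ commutes with each $\lambda(t)$), with $\phi$ now ranging only over the vectors arising as wot-limits of finitely supported coefficient sequences. Thus $\mathcal{A}_{\mathcal{S}} = \mathcal{A}_{\mathcal{S}}'$ is exactly equivalent to the claim that every multiplier vector $\phi$ (every $\phi$ for which $U_{\phi}$ is bounded) is a wot-limit of finite left convolutions $\sum_{s} a_s \lambda(s)$.

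For the main technique I would realize this multiplier problem inside a Hardy space. When $\mathcal{S}$ is cancellative it embeds in its Grothendieck group $G$, the Fourier transform identifies $\ell^2(G)$ with $L^2(\hat G)$, and $\ell^2(\mathcal{S})$ becomes a ``generalized $H^2$'' spanned by the characters $\{\gamma_s : s\in\mathcal{S}\}$, on which each $\lambda(s)$ acts as multiplication by $\gamma_s$; the desired statement is then that the multiplier algebra of this $H^2$ coincides with the wot-closure of the analytic combinations of the $\gamma_s$, i.e.\ with a generalized $H^{\infty}$. This is precisely the pattern already confirmed in the two tractable cases: for $\mathcal{S} = \Bbb{Z}_{+}$ it is the classical fact that the commutant of the unilateral shift is the maximal abelian algebra of analytic Toeplitz operators $\{T_{\phi} : \phi\in H^{\infty}\}$, and for $\mathcal{S} = \Bbb{Z}_{+}^{k}$ it is the characterization of the multiplier algebra of $H^{2}(\Bbb{D}^k)$ as $H^{\infty}(\Bbb{D}^k)$ invoked elsewhere in the paper. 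I would attempt to push a Beurling--Helson--Lowdenslager type argument through on $L^2(\hat G)$ for a general ordered $G$, using the order structure induced by the cone $\mathcal{S}\subseteq G$ to give meaning to ``analytic.''

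The genuinely hard part is twofold. First, for the full conjecture one must descend from $\lambda$ to its compression $\lambda_P$ on an arbitrary co-invariant (quotient-module) subspace $M$, and compression does not in general preserve maximal abelian-ness; there is no multivariable Beurling theorem describing such $M$ even for $\Bbb{Z}_{+}^{k}$, which is exactly the obstruction this paper sets out to bypass. A natural route is to show that once $\mathcal{A}_{\mathcal{S}}$ is known to be maximal abelian, every co-invariant $M$ is automatically co-hyperinvariant (via Proposition~\ref{prop-abelian} and Theorem~\ref{main-thm1}) and then to bootstrap from centrality back to maximal abelian-ness of each $\mathcal{A}_{\pi}$; but this risks circularity and must be arranged carefully. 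Second, the conjecture also asserts the non-cancellative case, where there is no Grothendieck group and hence no character or Hardy-space picture at all, and where the self-adjoint toolkit is entirely unavailable. As the authors' remark after Theorem~\ref{main-thm1} anticipates, what seems to be required here is a substitute ``finiteness'' property for the non-self-adjoint algebra $\mathcal{A}_{\mathcal{S}}'$, playing the role that the trace and the finite von Neumann algebra structure played in the group case of Theorem~\ref{group-case}. I expect the search for this finiteness substitute, rather than any single computation, to be the crux of a proof.
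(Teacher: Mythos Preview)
The statement you are attempting is not proved in the paper: it is explicitly stated as a \emph{Conjecture}, and the authors offer no argument for it. There is therefore no ``paper's proof'' to compare your proposal against. What the paper does establish are the special cases $\mathcal{S} = \Bbb{Z}_{+}^k$ (via Lemma~\ref{key-lem2}) and $\mathcal{S} = \mathcal{G}\times\Bbb{Z}_{+}^m$ with $\mathcal{G}$ finite abelian, and even there only the weaker assertion that $\mathcal{A}_{\mathcal{S}}$ --- the algebra of the left regular representation --- is maximal abelian, not that $\mathcal{A}_{\pi}$ is maximal abelian for an arbitrary frame representation $\pi$.

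Your proposal is a research outline rather than a proof, and you say so yourself in the final two paragraphs. The reductions in your opening paragraph are correct, and your identification of the two principal obstructions --- descending from $\lambda$ to an arbitrary compression $\lambda_{P}$, and the loss of any harmonic-analysis model in the non-cancellative case --- aligns with the paper's own remarks on why the question is open. One specific gap worth flagging in your bootstrap idea: centrality of $\pi$ (which does follow from maximal abelian-ness of $\mathcal{A}_{\mathcal{S}}$ via Proposition~\ref{prop-abelian}) asserts only that every frame vector $\eta$ equals $A\xi$ for some invertible $A\in\pi(\mathcal{S})'$; it does not say that every element of $\pi(\mathcal{S})'$ lies in the wot-closure of $\operatorname{span}\pi(\mathcal{S})$, so the passage from centrality back to $\mathcal{A}_{\pi}=\mathcal{A}_{\pi}'$ is genuinely missing, not merely ``at risk of circularity.''
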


\vspace{3mm}

The following tells us that for the  finite dimensional representation case, every frame representation of a commutative semigroup is central.

\begin{prop} Let $\mathcal{S}$ be a commutative semigroup. Then every frame representation of $\mathcal{S}$ on a finite dimensional Hilbert space $H$ is central.
\end{prop}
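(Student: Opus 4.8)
The plan is to identify the frame vectors of $\pi$ with the cyclic vectors of the commutative algebra generated by $\pi(\mathcal{S})$, and then to exploit the elementary fact that a commutative algebra acting on a finite-dimensional space with a cyclic vector is maximal abelian. First I would set $\mathcal{A} = \operatorname{span}\{\pi(s): s\in\mathcal{S}\}$. Since $\mathcal{S}$ is commutative and unital and $\pi$ is a representation, $\pi(\mathcal{S})$ is a commutative multiplicative set containing $I$, so its linear span $\mathcal{A}$ is a unital commutative subalgebra of $B(H)$, and passing from a set to its span does not change the commutant, whence $\mathcal{A}' = \pi(\mathcal{S})'$. Because every frame necessarily spans its ambient space and $\pi(e)=I$, each frame vector $\xi$ satisfies $\mathcal{A}\xi = \operatorname{span}\{\pi(s)\xi : s\in\mathcal{S}\} = H$; that is, every frame vector is a cyclic vector for $\mathcal{A}$.

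The key step is the observation that, for such an $\mathcal{A}$, the evaluation map $a \mapsto a\xi$ from $\mathcal{A}$ to $H$ is a linear bijection whenever $\xi$ is cyclic. Surjectivity is just cyclicity. For injectivity, if $a\xi = 0$ then for every $b\in\mathcal{A}$ commutativity gives $a(b\xi) = b(a\xi) = 0$, so $a$ annihilates $\mathcal{A}\xi = H$ and therefore $a = 0$. In particular $\dim\mathcal{A} = \dim H$, and the same reasoning shows $\mathcal{A}$ is maximal abelian: any $T\in\mathcal{A}'$ obeys $T\xi = a\xi$ for a unique $a\in\mathcal{A}$, and then $T$ and $a$ agree on all of $\mathcal{A}\xi = H$, so $T = a \in \mathcal{A}$.

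With this in hand, given two frame vectors $\xi$ and $\eta$ — hence two cyclic vectors — I would define $T\colon H\to H$ by $T(a\xi) = a\eta$ for $a\in\mathcal{A}$. This is well-defined and bijective precisely because both evaluation maps $a\mapsto a\xi$ and $a\mapsto a\eta$ are bijections. A direct check shows $T$ commutes with every $b\in\mathcal{A}$: writing $v = a\xi$, one has $bTv = ba\eta = T(ba\xi) = T(bv)$. Thus $T\in\mathcal{A}' = \pi(\mathcal{S})'$ is invertible, and taking $a = I$ gives $T\xi = \eta$. Consequently $T\pi(s)\xi = \pi(s)T\xi = \pi(s)\eta$ for all $s\in\mathcal{S}$, so $\xi$ and $\eta$ are equivalent frame vectors, and $\pi$ is central.

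This argument is really a finite-dimensional instance of Proposition \ref{prop-abelian}: the substantive point is that the presence of a cyclic vector forces the commutative algebra to be maximal abelian, which is exactly the condition under which every co-invariant subspace is co-hyperinvariant, so centrality also follows from Theorem \ref{main-thm1}. I do not anticipate a serious obstacle. The one point requiring care is the reduction from \emph{frame} to \emph{cyclic}, where finite-dimensionality lets me use only the easy direction (a frame spans) and sidestep Bessel or closedness subtleties; and it is commutativity of $\mathcal{S}$ that drives the injectivity of the evaluation map, so that hypothesis cannot be dropped. A final detail to confirm is that working with the linear span $\mathcal{A}$ rather than its $wot$-closure loses nothing here, which in finite dimensions is automatic since the span of a multiplicatively closed unital set is already a closed algebra with the same commutant.
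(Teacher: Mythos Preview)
Your proof is correct and takes essentially the same route as the paper's: both identify frame vectors with cyclic vectors for the commutative algebra $\mathcal{A}$ generated by $\pi(\mathcal{S})$, use this to produce an intertwiner in $\mathcal{A}\subseteq\pi(\mathcal{S})'$ sending $\xi$ to $\eta$, and extract invertibility from finite-dimensionality (the paper checks surjectivity of this operator directly from the spanning property of $\{\pi(s)\eta\}$, whereas you instead establish bijectivity of both evaluation maps $a\mapsto a\xi$ and $a\mapsto a\eta$). One small caution on your closing paragraph: the maximal-abelian conclusion you obtain concerns the algebra generated by $\pi$ on $H$, not the algebra $\mathcal{A}_{\mathcal{S}}$ acting on $\ell^2(\mathcal{S})$ that Proposition~\ref{prop-abelian} and Theorem~\ref{main-thm1} are about, so the connection you sketch there is an analogy rather than a direct application.
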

\begin{proof}  Let $\pi: \mathcal{S} \to B(H)$ be a frame representation such that $\dim H < \infty$, and let $\mathcal{A}$ be the wot-closed algebra generated by $\pi(\mathcal{S})$.
Let $\xi$ and $\eta$ be any two frame vectors for $\pi$. Since $H$ is finite dimensional, we get that $$\mathcal{A}\xi =\{A\xi: A\in\mathcal{A}\} = H,$$
i.e., $\xi$ is a strictly cyclic vector of $\mathcal{A}$. So there exists an operator $A\in \mathcal{A}$ such that $A\xi = \eta$. Since $\mathcal{S}$ is commutative, 
we get that $\mathcal{A}$ is abelian and so $A\in \mathcal{A}'$. Note that 
$$
range (A) \supseteq \{A\pi(s)\xi: s\in \mathcal{S} \} = \{\pi(s)A\xi: s\in \mathcal{S} \} =\{\pi(s)\eta: s\in \mathcal{S} \}
$$
and as $span\{\pi(s)\eta: s\in\mathcal{S}\} = H$, we get that  $A$ is  surjective and hence invertible as $H$ is finite dimensional. Thus $\xi$ and $\eta$ are equivalent frame vectors for $\pi$.
\end{proof}

We remark that in the finite dimensional representation case, neither the unital nor the left cancellation property on the semigroup $\mathcal{S}$ is required. 

Next we settle the case when $\mathcal{S} = \Bbb{Z}_{+}^k$. For this case, in order to show that  $\mathcal{A}_{\mathcal{S}}$ is maximal abelian we need to use the characterization of the {\it multiplier algebra} $H^{\infty}(\mathbb{D}^k)$ on the Hardy space $H^2(\mathbb{D}^k)$ over the polydisc $\Bbb{D}^k$. Recall that the Hardy Space $$H^2(\mathbb{D}^k) = \{ f: \mathbb{D}^k \to \mathbb{C}\  | \ f(z) = \underset{n}{\sum} c_{n}z^n \, \, \text{with} \, \,  \sum_{n}|c_n|^2 < \infty\}$$ is
the space of holomorphic functions on $\mathbb{D}^k$, the open polydisc in $\mathbb{C}^k$, with square-summable complex coefficients in the power series representation, where $n = (n_1, ... , n_k)$ runs over all the k-tuples of nonnegative integers and $z^n = z_1^{n_1}\cdots z_{k}^{n_k}$.  Let $M_{z_{j}}: H^2(\mathbb{D}^k) \to H^2(\mathbb{D}^k)$ be the multiplication (or shift) operator defined  by $M_{z_j}(f(z)) = z_jf(z) \ \  (\forall z\in \Bbb{D}^k)$, and let $\mathcal{M}(\Bbb{D}^k)$ be the $wot$-closed subalgebra generated by $M_{z_1}, ... , M_{z_k}$.  The multiplier algebra is given by $\{M_{\phi}: \phi\in L^{\infty}(\mathbb{D}^k) \cap H^2(\mathbb{D}^k)\}$ which can also be  identified with the Banach algebra $H^{\infty}(\mathbb{D}^k)$ of bounded analytic functions on $\Bbb{D}^k$ equipped with the supremum
norm $||\phi||_{\infty}$  ($= ||M_{\phi}||$). It is known (c.f.  \cite{Ball, Sarkar}) that $\mathcal{M}(\Bbb{D}^k)' =  H^{\infty}(\mathbb{D}^k)$. We need the following lemma.

\begin{lem} \label{key-lem2} The subalgebra $\mathcal{M}(\Bbb{D}^k)$ is maximal abelian.
 \end{lem}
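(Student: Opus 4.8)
The plan is to prove the two inclusions $\mathcal{M}(\mathbb{D}^k) \subseteq \mathcal{M}(\mathbb{D}^k)'$ and $\mathcal{M}(\mathbb{D}^k)' \subseteq \mathcal{M}(\mathbb{D}^k)$ separately. The first is immediate: the shift operators $M_{z_1}, \dots, M_{z_k}$ commute with one another, so the algebra they generate is abelian, and any abelian algebra is contained in its own commutant; passing to the wot-closure preserves this, giving $\mathcal{M}(\mathbb{D}^k) \subseteq \mathcal{M}(\mathbb{D}^k)'$. For the reverse inclusion I would invoke the cited identification $\mathcal{M}(\mathbb{D}^k)' = H^\infty(\mathbb{D}^k)$, after which it suffices to show that every multiplier $M_\phi$ with $\phi \in H^\infty(\mathbb{D}^k)$ already lies in the wot-closed algebra generated by the shifts. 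Thus the whole statement reduces to the approximation claim that the shifts generate all bounded analytic multipliers in the weak operator topology.

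To prove that claim, the strategy is a two-stage approximation of a fixed $\phi \in H^\infty(\mathbb{D}^k)$. For $0 < r < 1$ set $\phi_r(z) = \phi(rz)$. Each $\phi_r$ extends holomorphically to a neighborhood of the closed polydisc $\overline{\mathbb{D}^k}$, so its Taylor polynomials converge to it uniformly on $\overline{\mathbb{D}^k}$; since the operator norm of a multiplier equals the supremum norm of its symbol, the corresponding multiplication operators, which are genuine polynomials in $M_{z_1}, \dots, M_{z_k}$, converge to $M_{\phi_r}$ in operator norm, and hence each $M_{\phi_r}$ lies in $\mathcal{M}(\mathbb{D}^k)$. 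The remaining step is to let $r \to 1^-$ and show $M_{\phi_r} \to M_\phi$ in the weak operator topology; because $\mathcal{M}(\mathbb{D}^k)$ is wot-closed, this yields $M_\phi \in \mathcal{M}(\mathbb{D}^k)$ and finishes the reverse inclusion.

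For the weak operator convergence I would pass to the realization of $H^2(\mathbb{D}^k)$ as a closed subspace of $L^2(\mathbb{T}^k)$ on the distinguished boundary, so that $\langle M_{\phi_r} f, g\rangle = \int_{\mathbb{T}^k}\phi_r f\overline{g}\,dm$ for $f,g \in H^2(\mathbb{D}^k)$, with $f\overline{g} \in L^1(\mathbb{T}^k)$. Writing $\phi = \sum_n c_n z^n$ and $|n| = n_1 + \cdots + n_k$, the dilation has coefficients $c_n r^{|n|}$, so $\|\phi_r - \phi\|_{H^2}^2 = \sum_n |c_n|^2 (1 - r^{|n|})^2 \to 0$ as $r \to 1^-$ by dominated convergence; hence $\phi_r \to \phi$ in $L^2(\mathbb{T}^k)$. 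Combined with the uniform bound $\|\phi_r\|_\infty \le \|\phi\|_\infty$ (which holds because $rz$ stays in the open polydisc), this upgrades to weak-$*$ convergence $\phi_r \to \phi$ in $L^\infty(\mathbb{T}^k) = L^1(\mathbb{T}^k)^*$: an arbitrary $h \in L^1$ is split into an $L^2$ piece, handled by the $L^2$-convergence, and an $L^1$-small remainder, controlled by the uniform $L^\infty$ bound. Testing against $h = f\overline{g}$ then gives $\langle M_{\phi_r} f, g\rangle \to \langle M_\phi f, g\rangle$, which is exactly the required wot-convergence.

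The main obstacle is precisely this last convergence step, and more specifically the need to keep the approximating symbols uniformly bounded: one cannot simply use the Taylor partial sums of $\phi$, since these need not be uniformly bounded in $L^\infty$ nor convergent in any topology strong enough to pass to the weak operator limit. The dilation $\phi_r$ (an Abel-type summation) is what repairs this, trading a single limit for a double approximation that stays within the norm-closed algebra. An alternative I would keep in reserve is to replace dilations by the product Fej\'er (Ces\`aro) means, which are polynomials with $L^\infty$-norm bounded by $\|\phi\|_\infty$ directly, at the cost of invoking convergence properties of the multidimensional Fej\'er kernel. Either route establishes $H^\infty(\mathbb{D}^k) \subseteq \mathcal{M}(\mathbb{D}^k)$; combining it with $\mathcal{M}(\mathbb{D}^k) \subseteq \mathcal{M}(\mathbb{D}^k)' = H^\infty(\mathbb{D}^k)$ forces equality throughout, so $\mathcal{M}(\mathbb{D}^k) = \mathcal{M}(\mathbb{D}^k)'$ and $\mathcal{M}(\mathbb{D}^k)$ is maximal abelian.
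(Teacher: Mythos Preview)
Your argument is correct. The overall architecture matches the paper's proof exactly: both reduce the claim, via the cited identity $\mathcal{M}(\mathbb{D}^k)' = H^\infty(\mathbb{D}^k)$, to showing that every $M_\phi$ with $\phi\in H^\infty(\mathbb{D}^k)$ lies in the wot-closure of the polynomial multipliers, and both do this by producing uniformly bounded polynomial-multiplier approximants that converge in a weak topology. The only genuine difference is the choice of summability method. The paper uses the Ces\`aro/Fej\'er means $\psi_n=\frac{1}{n+1}\sum_{j\le k\le n}\phi_j$ of the homogeneous expansion, invoking the Fej\'er-kernel positivity argument (citing \cite{Mc-TAMS}) to get $\|M_{\psi_n}\|\le\|M_\phi\|$ and then SOT-convergence via a dense-subspace check. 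You instead use the Abel means $\phi_r(z)=\phi(rz)$; the uniform bound $\|\phi_r\|_\infty\le\|\phi\|_\infty$ is then immediate from the maximum principle, at the small cost of a two-stage limit (Taylor polynomials to $\phi_r$, then $r\to 1^-$). You even flag the Fej\'er alternative explicitly, which is precisely what the paper does. Either route is standard and adequate; yours has the minor advantage that the crucial uniform bound needs no kernel computation.
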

 \begin{proof} This proof follows the similar argument from the proof of  Lemma 3.1 in  \cite{Mc-TAMS}. Let $A \in \mathcal{M}(\Bbb{D}^k)'$. Since $\mathcal{M}(\Bbb{D}^k)' =  H^{\infty}(\mathbb{D}^k)$, there exists  $\phi\in H^{\infty}(\mathbb{D}^k)$ such that  $A = M_{\phi}$, where $M_{\phi} \in B(H^2(\mathbb{D}^k))$ is the operator defined by multiplication by $\phi$.  Let $$\phi = \sum_{n=0}^{\infty}\phi_n$$ be an  expansion of homogeneous analytic functions  $\phi_n$ on  $\Bbb{D}^{k}$, and let $$\psi_n = {1\over n+1}\sum_{k=0}^{n}\sum_{j=0}^{k} \phi_j.$$  
Then the same Fejer  kernel argument as in Lemma 3.1 in \cite{Mc-TAMS} implies that  $||M_{\psi_{n}}|| \leq ||M_{\phi} ||$. Since $\{M_{\psi_n}f\}_{n=0}^{\infty}$ converges to $M_{\phi}f$ on a dense subspace (the subspace consisting of the polynomials) of $H^2(\Bbb{D}^k)$ and $\{M_{\psi_n}\}_{n=0}^{\infty}$ is uniformly bounded,  we get that $\{M_{\psi_n}\}_{n=0}^{\infty}$ converges to $M_{\phi}$ in the strong operator topology. Since $M_{\psi_n} \in \mathcal{M}(\Bbb{D}^k)$ and $\mathcal{M}(\Bbb{D}^k)$ is also closed in the strong operator topology, we get that  $A = M_{\phi}\in\mathcal{M}(\Bbb{D}^k)$. Thus $\mathcal{M}(\Bbb{D}^k) = \mathcal{M}(\Bbb{D}^k)'$, and hence $\mathcal{M}(\Bbb{D}^k)$ is maximal abelian.
  \end{proof}

 \begin{theo} Every frame representation of $ \Bbb{Z}_{+}^k$ is a central frame representation. Or,  equivalently, if $(A_1, ... , A_k)$ is a commuting $k$-tuple of operators in $B(H)$ and 
  $\{A_1^{n_1}\cdots A_k^{n_k}\xi: n_j\geq 0\}$ and $\{A_1^{n_1}\cdots A_k^{n_k}\eta: n_j\geq 0\}$ are two frames for $H$, there there exists an operator $T\in B(H)$ commuting with each $A_j$ such that $\eta = T\xi$.
\end{theo}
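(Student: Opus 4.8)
The plan is to reduce the statement entirely to Proposition \ref{prop-abelian} and Lemma \ref{key-lem2} by identifying the left regular representation of $\Bbb{Z}_{+}^{k}$ with the multiplication representation on $H^2(\Bbb{D}^k)$. First I would set up the canonical unitary $U: \ell^2(\Bbb{Z}_{+}^{k}) \to H^2(\Bbb{D}^k)$ determined by $U\delta_n = z^n$ for each $n = (n_1, \ldots, n_k) \in \Bbb{Z}_{+}^{k}$; this is genuinely a unitary because $\{\delta_n\}_{n}$ and $\{z^n\}_{n}$ are orthonormal bases of their respective spaces. The whole argument then rests on showing that $U$ carries $\mathcal{A}_{\Bbb{Z}_{+}^{k}}$ onto $\mathcal{M}(\Bbb{D}^k)$.

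Next I would compute how $U$ intertwines the generators. Writing $e_j$ for the $j$-th standard generator of $\Bbb{Z}_{+}^{k}$, the additive structure gives $\lambda(e_j)\delta_n = \delta_{n + e_j}$, so $U\lambda(e_j)U^* z^n = z^{n+e_j} = z_j z^n = M_{z_j} z^n$; hence $U\lambda(e_j)U^* = M_{z_j}$ and, more generally, $U\lambda(n)U^* = M_{z^n}$ for every $n$. Because conjugation by the fixed unitary $U$ is a homeomorphism of $B(\ell^2(\Bbb{Z}_{+}^{k}))$ onto $B(H^2(\Bbb{D}^k))$ for the weak operator topology, and it carries $\operatorname{span}\{\lambda(n)\}$ onto $\operatorname{span}\{M_{z^n}\}$ (which is exactly the polynomial algebra generated by $M_{z_1}, \ldots, M_{z_k}$), I would conclude that $U\mathcal{A}_{\Bbb{Z}_{+}^{k}}U^* = \mathcal{M}(\Bbb{D}^k)$.

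Then the conclusion follows by invoking the already-established results. Lemma \ref{key-lem2} gives that $\mathcal{M}(\Bbb{D}^k)$ is maximal abelian, i.e. $\mathcal{M}(\Bbb{D}^k) = \mathcal{M}(\Bbb{D}^k)'$; since maximal abelianness is preserved under unitary conjugation, $\mathcal{A}_{\Bbb{Z}_{+}^{k}}$ is maximal abelian as well. As $\Bbb{Z}_{+}^{k}$ is a unital commutative cancellative semigroup, Proposition \ref{prop-abelian} then yields that every frame representation of $\Bbb{Z}_{+}^{k}$ is central. To obtain the operator-tuple reformulation, recall that any such representation has the form $\pi(n) = A_1^{n_1}\cdots A_k^{n_k}$ for a commuting $k$-tuple $(A_1, \ldots, A_k)$; centrality provides an invertible $T \in \pi(\Bbb{Z}_{+}^{k})'$ with $T\xi = \eta$, and evaluating the commutation relation $T\pi(n) = \pi(n)T$ at $n = e_j$ shows that $T$ commutes with each $A_j$.

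The genuinely substantive content has already been isolated into Lemma \ref{key-lem2} (the Fej\'er-kernel maximal-abelian argument for $H^2(\Bbb{D}^k)$), so the only real obstacle here is bookkeeping around closures: I must verify that the algebra generated by $M_{z_1}, \ldots, M_{z_k}$ is precisely $\operatorname{span}\{M_{z^n} : n \in \Bbb{Z}_{+}^{k}\}$, that its $wot$-closure is $\mathcal{M}(\Bbb{D}^k)$, and that the $wot$-closure of $\operatorname{span}\{\lambda(n)\}$ transports correctly under $U$. I expect no difficulty beyond confirming these closure statements, all of which hold because $U(\cdot)U^*$ is $wot$-bicontinuous and because products of the $M_{z_j}$ reproduce exactly the monomials $M_{z^n}$.
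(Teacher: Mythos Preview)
Your proposal is correct and follows essentially the same route as the paper: both identify $\mathcal{A}_{\Bbb{Z}_{+}^{k}}$ with $\mathcal{M}(\Bbb{D}^k)$ via a unitary equivalence of the left regular representation with the shift representation on $H^2(\Bbb{D}^k)$, then invoke Lemma~\ref{key-lem2} and Proposition~\ref{prop-abelian}. The only cosmetic difference is that the paper cites the general fact that any two representations admitting complete wandering vectors are unitarily equivalent (applied to $\lambda$ and the representation $\sigma(n)=M_{z_1}^{n_1}\cdots M_{z_k}^{n_k}$ with wandering vector $1$), whereas you write down the unitary $U\delta_n=z^n$ explicitly and verify the intertwining by hand; your version is slightly more self-contained, and your added remark deriving the commuting-tuple formulation from centrality is a detail the paper leaves implicit.
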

\begin{proof} Define the semigroup representation $\sigma: \Bbb{Z}_{+}^k \to B(H^2(\mathbb{D}))$ by
$$
\sigma(n)  = M_{z_1}^{n_1}\cdots M_{z_k}^{n_k}, \ \  n\in \Bbb{Z}_{+}^k.
$$
Then the constant function $1$ is a complete wandering vector for $\sigma$.  Since both $\lambda$ and $\sigma$ admit complete wandering vectors, we get that  $\sigma$ is equivalent to the left regular representation $\lambda$ of the semigroup $\Bbb{Z}_{+}^k$. Thus $\mathcal{A}_{\Bbb{Z}_{+}^k}$ is maximal abelian if and only if $\mathcal{M}(\Bbb{D}^k)$ is maximal abelian. By Lemma \ref{key-lem2}, we get that  $\mathcal{A}_{\Bbb{Z}_{+}^k}$ is maximal abelian, and therefore,  by Proposition \ref{prop-abelian}, every frame representation of $\Bbb{Z}_{+}^k$ is central.
\end{proof} 

 We say that an element  $s$ in a unital semigroup $\mathcal{S}$ has order $N$ if $s^N = e$ and $s^n \neq e$ for any $0< n < N$.  For the case $N = \infty$, it means $s^n \neq e$ for any $n > 0$. The following addresses the hybrid case  $\mathcal{S} = \mathcal{G}\times \Bbb{Z}_{+}^k$  where $\mathcal{G}$ is a finite abelian group (which seems to cover many interesting cases of the finitely generated commutative semigroups). 

\begin{theo} Let $\mathcal{S}$ be a untial abelian semigroup generated by $s_1, ... s_k$ of order $N_1, ..., N_k$. Suppose that every element $s$ in $\mathcal{S}$ has a unique representation in the sense that 
$$
s_1^{m_1}\cdots s_k^{m_k} = s_1^{n_1}\cdots s_k^{n_k}
$$
implies that $m_i = n_i$ for $i=1, .., k$, where $0\leq m_i, n_i < N_i$.
Then $\mathcal{A}_S$ is maximal abelian, and hence every frame representation of $S$ is central.
\end{theo}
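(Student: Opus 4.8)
The plan is to use the unique-representation hypothesis to write $\mathcal{S}$ explicitly as a direct product and then to compute the commutant of $\mathcal{A}_{\mathcal{S}}$ by hand, reducing everything to the maximal abelianness of $\mathcal{M}(\Bbb{D}^q)$ already established in Lemma \ref{key-lem2}. After relabeling, suppose $s_1, \ldots, s_p$ have finite orders $N_1, \ldots, N_p$ and $s_{p+1}, \ldots, s_k$ have infinite order, and set $q = k-p$. The uniqueness of representations says precisely that $(m_1, \ldots, m_k) \mapsto s_1^{m_1}\cdots s_k^{m_k}$ (with $0 \le m_j < N_j$) is a bijection onto $\mathcal{S}$ intertwining the operations, so $\mathcal{S} \cong \mathcal{G} \times \Bbb{Z}_{+}^q$, where $\mathcal{G} = \Z_{N_1} \times \cdots \times \Z_{N_p}$ is a finite abelian group. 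Hence $\ell^2(\mathcal{S}) \cong \ell^2(\mathcal{G}) \otimes \ell^2(\Bbb{Z}_{+}^q)$, and under this identification $\mathcal{A}_{\mathcal{S}}$ is the $wot$-closed algebra generated by $\mathcal{A}_{\mathcal{G}} \otimes I$ together with $I \otimes \mathcal{A}_{\Bbb{Z}_{+}^q}$.

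Next I would record the two factor algebras. Since $\mathcal{G}$ is a finite abelian group, $\mathcal{A}_{\mathcal{G}}$ is its finite-dimensional (hence already $wot$-closed) group algebra; the Fourier transform diagonalizes it, producing an orthogonal family of minimal projections $\{P_\chi : \chi \in \hat{\mathcal{G}}\}$ with $\sum_\chi P_\chi = I$ and $\mathcal{A}_{\mathcal{G}} = \operatorname{span}\{P_\chi\}$, which is maximal abelian in $B(\ell^2(\mathcal{G}))$. For the free part, the representation $\sigma(n) = M_{z_1}^{n_1}\cdots M_{z_q}^{n_q}$ on $H^2(\Bbb{D}^q)$ has complete wandering vector $1$, while $\lambda$ on $\ell^2(\Bbb{Z}_{+}^q)$ has complete wandering vector $\delta_e$; since both admit complete wandering vectors they are unitarily equivalent, so $\mathcal{A}_{\Bbb{Z}_{+}^q} \cong \mathcal{M}(\Bbb{D}^q)$, which is maximal abelian by Lemma \ref{key-lem2}. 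Using that each $P_\chi = \tfrac{1}{|\mathcal{G}|}\sum_{g\in\mathcal{G}} \overline{\chi(g)}\,\lambda(g)$ is a finite combination of generators (so $P_\chi \otimes I \in \mathcal{A}_{\mathcal{S}}$), one then verifies that $\mathcal{A}_{\mathcal{S}}$ is exactly the block-diagonal algebra $\{\sum_{\chi} P_\chi \otimes m_\chi : m_\chi \in \mathcal{M}(\Bbb{D}^q)\}$.

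The heart of the argument is the commutant computation. Writing $\ell^2(\mathcal{S}) = \bigoplus_{\chi \in \hat{\mathcal{G}}} \ell^2(\Bbb{Z}_{+}^q)$ via the $P_\chi$, let $T \in \mathcal{A}_{\mathcal{S}}'$. Because $T$ commutes with every $P_\chi \otimes I \in \mathcal{A}_{\mathcal{S}}$ it must be block diagonal, $T = \bigoplus_\chi T_\chi$ with $T_\chi \in B(\ell^2(\Bbb{Z}_{+}^q))$. Because $T$ also commutes with $I \otimes m$ for every $m \in \mathcal{M}(\Bbb{D}^q)$, each block satisfies $T_\chi \in \mathcal{M}(\Bbb{D}^q)' = \mathcal{M}(\Bbb{D}^q)$, again by Lemma \ref{key-lem2}. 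Hence $T = \sum_\chi P_\chi \otimes T_\chi \in \mathcal{A}_{\mathcal{S}}$, giving $\mathcal{A}_{\mathcal{S}}' \subseteq \mathcal{A}_{\mathcal{S}}$; the reverse inclusion is automatic since $\mathcal{S}$ is abelian. Thus $\mathcal{A}_{\mathcal{S}} = \mathcal{A}_{\mathcal{S}}'$ is maximal abelian, and Proposition \ref{prop-abelian} then gives that every frame representation of $\mathcal{S}$ is central.

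The step I expect to be the main obstacle is justifying the tensor factorization rigorously. Because $\mathcal{M}(\Bbb{D}^q)$ is \emph{not} self-adjoint, I cannot invoke the von Neumann algebra commutation theorem to assert that a tensor product of maximal abelian algebras is maximal abelian, so the argument must avoid that route. The device that rescues the proof is the finite-dimensionality of the group factor $\mathcal{G}$: diagonalizing $\mathcal{A}_{\mathcal{G}}$ converts ``commuting with $\mathcal{A}_{\mathcal{S}}$'' into a finite block-diagonal condition, collapsing the whole problem onto the single non-self-adjoint factor $\mathcal{M}(\Bbb{D}^q)$, where maximal abelianness is exactly Lemma \ref{key-lem2}. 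Some care is also required to confirm that $\mathcal{A}_{\mathcal{S}}$ really is the \emph{full} block-diagonal algebra rather than a proper $wot$-closed subalgebra, since that is precisely what makes the projections $P_\chi \otimes I$ available as elements of $\mathcal{A}_{\mathcal{S}}$ in the commutant step.
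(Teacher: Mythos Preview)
Your proposal is correct and follows essentially the same route as the paper: both identify $\mathcal{S}$ with $\mathcal{G}\times\Bbb{Z}_{+}^{q}$ for a finite abelian group $\mathcal{G}$, diagonalize $\mathcal{A}_{\mathcal{G}}$ (the paper via an abstract unitary onto the diagonal algebra $\mathcal{D}_n$, you via the Fourier projections $P_\chi$), and then reduce maximal abelianness to a finite direct sum of copies of $\mathcal{M}(\Bbb{D}^q)$, where Lemma \ref{key-lem2} applies. Your explicit block-diagonal commutant computation is precisely the content the paper compresses into its one-line claim that $\mathcal{D}_n\otimes\mathcal{A}_{\Bbb{Z}_+^m}$ is maximal abelian.
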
 

\begin{proof}  We can assume that $N_1, ... , N_{\ell}$ are finite and $N_{\ell+1} = ... = N_k = \infty$. Let $\mathcal{G}$ be the semigroup generated by $s_1, ... , s_{\ell}$. Then $\mathcal{G}$ is a finite abelian group. By the assumption on the unique representation of every $s$ by the generators $s_1, ... , s_k$, we get that $\mathcal{S}$ is  isomorphic (as semigroups) to $\mathcal{T}: = \mathcal{G}\times \Bbb{Z}_{+}^{m}$, where $m = k-\ell$. Thus we only need to show that $\mathcal{A}_{\mathcal{T}}$ is maximal abelian. Note that   $\mathcal{A}_{\mathcal{T}} = \mathcal{A}_{\mathcal{G}} \otimes  \mathcal{A}_{\Bbb{Z}_{+}^m}$.

Since $\mathcal{A}_{\mathcal{G}}$ is the maximal abelian von Neumann algebra acting on the finite dimension space $\ell^2(\mathcal{G})$ and has a cyclic vector, it is well known that  there exists a unitary operator $U:  \Bbb{C}^{n}\to \ell^{2}(\mathcal{G})$ (where $n = |G|  < \infty$) such that 
$$
U^*\mathcal{A}_{\mathcal{G}} U= \mathcal{D}_{n},
$$
where $\mathcal{D}_{n}$ is the diagonal subalgebra of the matrix algebra $M_{n\times n}(\Bbb{C})$. 

By Lemma \ref{key-lem2}, we know that  $\mathcal{A}_{\Bbb{Z}_{+}^m}$ is maximal abelian, which implies that 
$$
\mathcal{D}_{n} \otimes  \mathcal{A}_{\Bbb{Z}_{+}^m} =  \mathcal{A}_{\Bbb{Z}_{+}^m}\oplus \cdots  \oplus \mathcal{A}_{\Bbb{Z}_{+}^m} \ \  (n-\text{copies})
$$
is maximal abelian. Thus by the unitary equivalence we get that  $\mathcal{A}_{\mathcal{T}}$ is maximal abelian, and therefore from Proposition \ref{prop-abelian} we get that every frame representation of $\mathcal{S}$ is central.
\end{proof}

\vspace{3mm}




\end{document}